\newtheorem{theorem}{Theorem}[subsection]
\newtheorem{corollary}[theorem]{Corollary}
\newtheorem{definition}[theorem]{Definition}
\newtheorem{example}[theorem]{Example}
\newtheorem{lemma}[theorem]{Lemma}
\newtheorem{proposition}[theorem]{Proposition}
\newtheorem{remark}[theorem]{Remark}
\newtheorem{algorithm}[theorem]{Algorithm}
\newcommand{\pn}{\par\noindent}
\newcommand{\mor}[3]{$\xymatrix@1@C=15pt{#3: #1\ar[r]& #2}$}
\newcommand{\iso}[3]{$\xymatrix@1@C=15pt{#3: #1\ar[r]^-{\cong}& #2}$}
\begin{document}
\sloppy
\bibliographystyle{plain}

\title[Infinite friezes and triangulations of the strip]{Infinite friezes and triangulations of the strip}

\author{D. Smith}
\thanks{2010 Mathematics Subject Classification : 05E15, 13F60}%
\thanks{Key words and phrases : Conway-Coxeter frieze patterns, tiling, triangulation, infinity-gon}
\address{\pn David Smith; Department of mathematics,
Bishop's University. Sherbrooke, Qu\'ebec, Canada,  J1M1Z7.}
\email{dsmith@ubishops.ca}

\begin{abstract} 
The infinite friezes of positive integers were introduced by Tschabold as a variation of the classical Conway-Coxeter frieze patterns. These infinite friezes were further shown be to realizable via triangulations of the infinite strip by Baur, Parsons and Tschabold.  In this paper, we show that the construction of Baur, Parsons and Tschabold can be slightly adapted in order to obtain a bijection between the infinite friezes and the so-called admissible triangulations of the infinite strip with no special marked points on the upper boundary. As a consequence, we obtain that the infinite friezes with enough ones are in bijection with the admissible triangulations of the infinity-gon.
\end{abstract}

\maketitle

\section*{Introduction}

The study of friezes goes back to the 1970's with the publication of companion papers by Conway and Coxeter \cite{CC73-1, CC73-2}. In these papers, the authors established a bijection between the so-called frieze patterns (of positive integers) of rank $n$ and the triangulations of the regular polygon with $n$ vertices, for $n\geq 3$. Roughly, a frieze pattern of order $n$, with $n\geq 3$, can be seen as an arrangement of real numbers into a band consisting of $n-1$ adjacent infinite diagonals in the discrete plane $\mathbb{Z}\times\mathbb{Z}$ (the first and the last consisting only of ones) such that each adjacent $2\times 2$-matrix in this band has determinant one. These papers were shortly followed by a paper by Broline et al. \cite{BCI74} in which new interpretations of the numbers appearing in the frieze patterns of Conway and Coxeter were given.

The interest for these frieze patterns was revived in the 2000's due to their connections with the cluster algebras and cluster categories via the Caldero-Chapoton map, and many generalizations were then studied.  In particular, the $SL_2$-tilings of the discrete plane $\mathbb{Z}\times\mathbb{Z}$ were introduced by Assem, Reutenauer and Smith in \cite{ARS10}: an $SL_2$-tiling is a map $t:\mathbb{Z}\times\mathbb{Z}\rightarrow \mathbb{N}$ such that $\left|\begin{array}{cc} t(i,j) & t(i, j+1) \\ t(i+1,j) & t(i+1, j+1)\end{array}\right|=1$ for all $(i,j)\in \mathbb{Z}\times\mathbb{Z}$.  In \cite{HJ13}, Holm and J{\o}rgensen demonstrated that the $SL_2$-tilings (of the discrete plane) with enough ones are in bijection with some triangulations of the strip, called good triangulations.  This construction was further extended in \cite{BHJ15}. We refer the reader to the rather complete survey paper by Morier-Genoud \cite{MG15} for a more in-depth immersion in the world of friezes and $SL_2$-tilings. 

More recently, Tschabold introduced in \cite{T15} the notion of infinite friezes.  In \cite{BPT15}, Baur, Parsons and Tschabold showed that every infinite frieze can be realized as an admissible triangulation of the infinite strip with marked points $\mathbb{V}=\mathbb{V}(M_1, M_2)$, where the marked points on the lower boundary are indexed by $M_1=\mathbb{Z}$ and the marked points on the upper boundary are indexed by some subset $M_2$ of $\mathbb{Z}$. Moreover, they showed that every periodic infinite frieze can be realized as a triangulation of the annulus.

The purpose of this paper is two-fold. First, we show that the infinite friezes share many properties with the frieze patterns of Conway and Coxeter \cite{CC73-1,CC73-2}, completing a work that was initiated in \cite{T15}. Second, we show that the constructions of Baur, Parsons and Tschabold can be refined in order to obtain the following result: \\

\noindent \textbf{Theorem.}
There exists a bijection between the infinite friezes and the admissible triangulations of the infinite strip $\mathbb{V}(M_1, M_2)$ having no special marked points on the upper boundary, up to Dehn twist equivalence when $M_2=\mathbb{Z}$.
\\

As a consequence, we obtain an analogue of the main result of Holm and J{\o}rgensen~\cite{HJ13}: we show that the infinite friezes with enough ones are in bijection with the admissible triangulations of the $\infty$-gon, see Proposition~\ref{prop:enough ones}.

The paper is organized as follows: In Section 1, we recall the necessary background on the frieze patterns of Conway and Coxeter \cite{CC73-1, CC73-2} and infinite friezes of Tschabold \cite{T15}, and show that these two objects share many properties. In Section 2, we recall and refine the main ingredients from the paper by Baur, Parsons and Tschabold \cite{BPT15} about the connection between the infinite friezes and the triangulations of the infinite strip. This allows us to present an algorithm showing that every infinite frieze can be realized via an admissible triangulation of the infinite strip with no special marked points on the upper boundary. We also discuss the so-called Dehn twist and show that the Conway-Coxeter counting method \cite{CC73-1, CC73-2} and the Broline-Crowe-Isaacs counting method \cite{BCI74} still hold for the infinite friezes. Finally, Section 3 is devoted to the proof of our main theorem and to the study of the triangulations of the $\infty$-gon.

\section{Frieze patterns and infinite friezes}

In this section, we first recall the necessary background on the frieze patterns of Conway and Coxeter, and especially the their interpretations in terms of triangulated polygons, see \cite{BCI74, CC73-1, CC73-2}. In the second part, we recall the infinite friezes from \cite{BPT15}, and demonstrate some elementary properties of the infinite friezes. Our goal is actually to create parallels with some results obtained by Conway and Coxeter on frieze patterns. 

However, before doing so, the reader should be advised that, in this paper, we took the difficult decision to adopt the following convention from the works of Holm, J{\o}rgensen and Bessenrodt \cite{BHJ15, HJ13}, rather than the ones used by Baur, Parsons and Tschabold in \cite{BPT15, T15}. This decision was motivated by the wish of being more in line with the original notations of Conway and Coxeter (that are slightly different than those used in \cite{BPT15, T15}) and, possibly more importantly, facilitating the location of a specific entry in an infinite frieze or a frieze pattern.  
\\

\noindent\textbf{Convention. }
As in \cite{BHJ15, HJ13}, we follow the matrix convention when dealing with entries in the discrete plane $\mathbb{Z}\times\mathbb{Z}$, so that the $x$-coordinate increases from top to bottom and the $y$-coordinate increases from left to right.  
\\

As a consequence, our frieze patterns and infinite friezes look slightly different than the ones given in the original papers by Conway-Coxeter \cite{CC73-1, CC73-2} and Tschabold \cite{T15}, although there are mathematically equivalent. Roughly speaking, our frieze patterns and infinite friezes can be obtained from the ones in \cite{CC73-1, CC73-2} and \cite{T15} upon a anticlockwise rotation of $135$ degrees. For instance, an infinite frieze in \cite{BPT15, T15} consists of infinitely many offset bi-infinite rows, while our infinite friezes consist of infinitely many bi-infinite diagonals (running \lq northwest\rq\, to \lq southeast\rq) in the discrete plane $\mathbb{Z}\times\mathbb{Z}$.

In addition to these cosmetic differences between our infinite friezes and those of \cite{BPT15, T15}, note that we also added an antisymmetry (see condition (c) in Definition \ref{def:infinite frieze} below), having as a result that an infinite frieze becomes a complete filling of the discrete plane $\mathbb{Z}\times\mathbb{Z}$ with integers, therefore turning them into $SL_2$-tilings of the discrete plane where, however, not all entries are positive integers.  This antisymmetry is obtained for free, and does not alternate the nature of an infinite frieze. However, this artificial addition allows to simplify the writing of some definitions and statements, and allows to refer to results from \cite{HJ13} on $SL_2$-tilings.

\subsection{Conway-Coxeter frieze patterns}\label{section:CC}
Let $n\geq 3$ be an integer.  Consider the subset of $\mathbb{Z}\times\mathbb{Z}$ defined by
\[
B_n=\{(i,j)\in\mathbb{Z}\times\mathbb{Z} \ | \  0<j-i<n \}. 
\]
Considering our matrix notation convention, $B_n$ consists in a band of $n-1$ bi-infinite adjacent diagonals running \lq northwest\rq\, to \lq southeast\rq.  

\begin{definition}
Let $n\geq 3$.  A frieze pattern (of positive integers) of rank $n$ is a function $f:B_n\rightarrow \{1, 2, 3, \dots\}$ such that 
\begin{enumerate}
\item [(a)] $f(i,j)=1$ if $j-i=1$ or $j-i=n-1$, and 
\item[(b)] $\left|\begin{array}{cc} f(i,j) & f(i, j+1) \\ f(i+1,j) & f(i+1, j+1)\end{array}\right|=1$ for all $(i,j) \in B_n$ such that $(i+1,j)\in B_n$ and $(i,j+1)\in B_n$.
\end{enumerate}
\end{definition}
 In other words, a frieze pattern (of positive integers) of rank $n$ consists of an arrangement of positive integers into $n-1$ adjacent diagonals such that the first and the last diagonal contain only $1's$ and all adjacent $2\times 2$-matrix has determinant one.
 
 
\begin{example}\label{ex:seven-gon}
The Figure~\ref{fig:tiling1} presents a part of a frieze pattern of rank $7$.
{\em
\begin{center}
\begin{figure}
  \centering
  \begin{tikzpicture}[auto]
    \draw (2.8,-2.3) -- (2.8,1.4) -- (-1.5,1.4) -- (2.8,-2.3);
    \matrix
    {
			&&\node{{\em (-1)}};&\node{{\em (0)}};&\node{{\em (1)}};&\node{{\em (2)}};&\node{{\em (3)}};&\node{{\em (4)}};&\node{{\em (5)}};& \node{{\em (6)}}; &\node{{\em (7)}};&\node{{\em (8)}};&\node{{\em (9)}};& \\
      &&&&&&& \node{$\vdots$}; &&&&&& \\
			 \node{{\em (-1)}}; &&  \node {-}; & \node {1}; & \node {4}; & \node {3}; & \node {2}; & \node {3}; & \node {1}; & \node {-}; & \node {-};& \node {-};& \node {-};\\
			\node{{\em (0)}}; && \node {-}; &\node {-}; & \node {1}; & \node {1}; & \node {1}; & \node {2}; & \node {1}; & \node {1};& \node {-}; &\node {-}; &\node {-}; & \\
      \node{{\em (1)}}; && \node {-}; &\node {-}; &\node {-}; & \node {1}; & \node {2}; & \node {5}; & \node {3}; & \node {4}; & \node {1}; &\node {-}; &\node {-}; &\\
      \node{{\em (2)}};&& \node {-}; &\node {-}; &\node {-}; & \node {-}; & \node {1}; & \node {3}; & \node {2}; & \node {3}; & \node {1}; &\node {1}; &\node {-}; &\\
      \node{{\em (3)}};&& \node {-}; &\node {-}; &\node {-}; & \node {-}; & \node {-}; & \node {1}; & \node {1}; & \node {2}; & \node {1};&\node {2}; &\node {1}; &\\
      \node{{\em (4)}};&\node{$\cdots$};&\node {-}; &\node {-}; & \node {-}; & \node {-}; & \node {-}; & \node {-}; & \node {1}; & \node {3}; & \node {2};&\node {5}; &\node {3}; &\node{$\cdots$};\\
      \node{{\em (5)}};&& \node {-}; &\node {-}; &\node {-}; & \node {-}; & \node {-}; & \node {-}; & \node {-}; & \node {1}; & \node {1};&\node {3}; &\node {2}; &\\
      \node{{\em (6)}};&& \node {-}; &\node {-}; &\node {-}; & \node {-}; & \node {-}; & \node {-}; & \node {-}; & \node {-}; & \node {1};&\node {4}; &\node {3}; &\\
      \node{{\em (7)}};&& \node {-}; &\node {-}; &\node {-}; & \node {-}; & \node {-}; & \node {-}; & \node {-}; & \node {-}; & \node {-}; &\node {1}; &\node {1}; &\\
      \node{{\em (8)}};&& \node {-}; &\node {-}; &\node {-}; & \node {-}; & \node {-}; & \node {-}; & \node {-}; & \node {-}; & \node {-};&\node {-}; &\node {1}; &\\
      &&&&&&& \node{$\vdots$}; &&&&&& \\
    };

  \end{tikzpicture} 
  \caption{Frieze pattern of rank $7$}
\label{fig:tiling1}
\end{figure}
\end{center}
}
 \end{example}

In \cite{CC73-1, CC73-2}, Conway and Coxeter outlined the fact that the frieze patterns of rank $n$ are in bijection with the triangulations of the regular $n$-gon, with $n\geq 3$.  Recall briefly that in this context, a triangulation can be seen as a maximal collection of noncrossing line segments (called arcs) joining two vertices of the regular $n$-gon.

The bijection is given via the following procedure, that we will refer to as the \textbf{CC-counting method}.  Suppose that a triangulation $T$ of the $n$-gon is given. Choose a vertex $A$ and label it zero.  Every vertex connected to $A$ by an arc of the triangulation or by a side of the polygon is labeled one.  Then, whenever a triangle has exactly two labeled vertices, the third vertex is labeled by the sum of the labels of the other two vertices.  The number $CC_T(A,B)$ will denote the resulting label on the vertex $B$.  Now, if the vertices are labeled from $1$ to $n$, the function $f:\{(A,B) \ | 1\leq A< B \leq n\}\rightarrow \{1, 2, 3, \dots\}$ given by $f(A,B)=CC_T(A,B)$ defines the fundamental region of a frieze pattern $f_T:B_n\rightarrow \{1, 2, 3, \dots\}$ of rank $n$ obtained upon translation of this fundamental region to the left (northwest) and right (southeast) by means of glide reflections as illustrated below (see \cite{CC73-1, CC73-2} for more details).
\vspace{-0.5cm}
\[
  \xymatrix @-3.25pc @! {
      & & *{} \\
      & & & \\
      \ddots & & & & \\
      & & & & & \\
      & & & & & & \\
      & & & & & & & & & & & & & & \\
      & & & & & & & & & & & & & & \\
      & & *{} \ar@{-}[rrrrrrr] \ar@{-}[uuuuuuu] & & & & & & & *{} \ar@{-}[uuuuuuulllllll] & & & & & \\
      & & & *{} \ar@{-}[rrrrrrr] \ar@{-}[dddddddrrrrrrr] & & & & & & & *{} \ar@{-}[ddddddd] & & & & \\
      & & & & & & & & & & & *{} & & & & \\
      & & & & & & & & & & & & & & & \\
      & & & & & & & & & & & & & & \\
      & & & & & & & & & & & & & & \\
      & & & & & & & & & & & & & & & \\
      & & & & & & & & & & & & & & & & \\
      & & & & & & & & & & & & & & & & & \\
      & & & & & & & & & & & *{} \ar@{-}[uuuuuuu] \ar@{-}[rrrrrrr] & & & & & & & *{} \ar@{-}[uuuuuuulllllll] \\
      & \\
      & & & & & & & & & & & & & & & & \ddots
    }
\]
More explicitly, if, for each $r\in \mathbb{Z}$, we denote by $\overline{r}$ the unique integer satisfying $1\leq \overline{r}\leq n$ and $\overline{r}\equiv r \pmod n$, then the function $f_T:B_n\rightarrow \{1, 2, 3, \dots\}$ defined by 
\[
f_T(i,j)=\left\{\begin{array}{ll}
CC_T(\overline{i}, \overline{j}), & \text{if } i<j,\\
CC_T(\overline{j}, \overline{i}), & \text{if } j<i.
\end{array} \right.
\]
is the frieze pattern (of positive integers) of rank $n$ associated to $T$.

\begin{example}\label{ex:seven-gon 2}
The following triangulation $T$ of the regular $7$-gon corresponds to the frieze pattern of rank $7$
given in Example \ref{ex:seven-gon}:
%
\begin{center}
\includegraphics[scale=1]{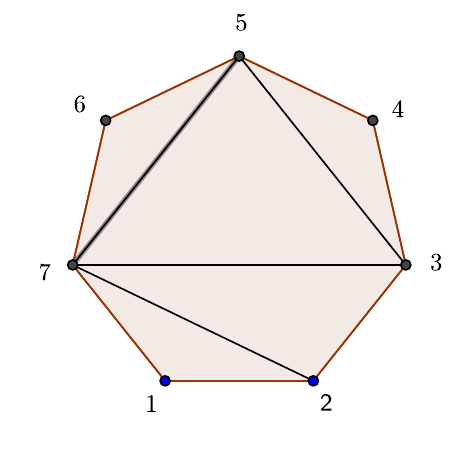}
\end{center}
For instance, one can verify that $CC_T(1,2)=1$, $CC_T(1,3)=2$, $CC_T(1,4)=5$, $CC_T(1,5)=3$, $CC_T(1,6)=4$ and $CC_T(1,7)=1$, and the sequence $1, 2, 5, 3, 4, 1$ appears on the row $(1)$ of the frieze pattern.  Similarly, $CC_T(2,3)=1$, $CC_T(2,4)=3$, $CC_T(2,5)=2$, $CC_T(2,6)=3$ and $CC_T(2,7)=1$, and the sequence $1, 3, 2, 3, 1$ appears on the row $(2)$ of the frieze pattern.  Finally, observe also that the column $(8)$ is the same as the row $(1)$ due to the glide reflection explained above: indeed, $f_T(x,8)=CC_T(1,x)$ for all $x\in\mathbb{Z}$ satisfying $2\leq x\leq 7$.
\end{example}

Another interpretation of the numbers appearing in a frieze pattern was provided by Broline, Crowe and Isaacs in \cite{BCI74}.  We will refer to this as the \textbf{BCI-counting method}. Suppose that a triangulation $T$ of the $n$-gon, with $n\geq 3$, is given. Given two vertices $A$ and $B$, consider the two walks (clockwise and counterclockwise) on the boundary arcs from $A$ to $B$, say $A,P_1,P_2,\dots, P_r,B$ and $A,Q_1,Q_2,\dots, Q_s, B$.  A \textbf{$r$-tuple for the walk} $A,P_1,P_2,\dots, P_r,B$ is an ordered $r$-tuple of triangles in the triangulation $T$ such that the $i$-th entry is a triangle of the triangulation having $P_i$ as a vertex, and no triangle appears more than once in the $r$-tuple.  Denote by $BCI_T(A,P_1,P_2,\dots, P_r,B)$ the number of $r$-tuples for the walk $A,P_1,P_2,\dots, P_r,B$, with the convention that $BCI_T(A)=0$ if $A=B$ and $BCI_T(A,B)=1$ if $A,B$ is a walk. Similarly, we define $BCI_T(A,Q_1,Q_2,\dots, Q_s,B)$.
 
The following proposition gathers two important results from \cite{BCI74}.
\begin{proposition}\label{prop:BCI}
Let $T$ be a triangulated $n$-gon, with $n\geq 3$.  Let $A$ and $B$ be two vertices of $n$-gon and $AP_1P_2\dots P_rB$ and $A,Q_1,Q_2,\dots, Q_s, B$ be the two walks from $A$ to $B$. Then 
\[
BCI_T(A,P_1,P_2,\dots, P_r,B)=CC_T(A,B)=BCI_T(A,Q_1,Q_2,\dots, Q_s,B).
\]
\end{proposition}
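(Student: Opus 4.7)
Since $CC_T(A,B)$ depends only on $A$, $B$ and $T$ (and not on the chosen walk), it suffices to establish one of the two equalities; the plan is to prove $BCI_T(A,P_1,\ldots,P_r,B)=CC_T(A,B)$, with the same argument then applied symmetrically to the opposite walk. I proceed by induction on the walk length $r\geq 0$. For $r=0$, the walk is $A,B$ with $AB$ a side of the polygon, so $B$ receives the label $1$ in the first step of the CC-counting method and $BCI_T(A,B)=1$ by convention; the degenerate case $A=B$ gives $0$ on both sides.

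For the inductive step, let $\Delta$ be the unique triangle of $T$ containing the boundary edge $P_rB$, and let $V$ denote its third vertex. The strategy is to establish the same two-term decomposition on each side, indexed by $\Delta$. On the Conway-Coxeter side, applying the triangle-sum rule of the CC-counting method to the triangle $\Delta=P_rVB$, which is valid since label-propagation from $A$ must reach both $P_r$ and $V$ before it can reach $B$ (as any such propagation in the dual tree of $T$ enters $\Delta$ through the arc $P_rV$), we obtain
\[
CC_T(A,B)=CC_T(A,P_r)+CC_T(A,V).
\]
On the BCI side, I partition the set of $r$-tuples $(T^{(1)},\ldots,T^{(r)})$ according to whether $T^{(r)}=\Delta$ or not. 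The tuples with $T^{(r)}=\Delta$ are in bijection with $(r-1)$-tuples for the walk $A,P_1,\ldots,P_{r-1},P_r$, which by the inductive hypothesis contribute $CC_T(A,P_r)$; the tuples with $T^{(r)}\neq\Delta$ must, by the target equality, contribute $CC_T(A,V)$.

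The main obstacle will be making this second contribution precise: I expect to construct a bijection between the $r$-tuples with $T^{(r)}\neq\Delta$ and the $(r-1)$-tuples for a suitable walk from $A$ to $V$ in the triangulated subpolygon obtained by cutting $T$ along the arc $P_rV$. Extra bookkeeping is needed when $V$ coincides with some $P_i$ or with $A$, in which case the polygon splits into smaller triangulated pieces and the inductive hypothesis must be invoked inside each piece separately. Once this bijection is in place, summing the two contributions yields
\[
BCI_T(A,P_1,\ldots,P_r,B)=CC_T(A,P_r)+CC_T(A,V)=CC_T(A,B),
\]
which closes the induction and proves the proposition.
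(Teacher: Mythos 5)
The paper does not actually prove this proposition (it is quoted from Broline--Crowe--Isaacs \cite{BCI74}), so your argument has to stand on its own, and its key step is false. You let $\Delta$ be the triangle on the boundary edge $P_rB$, with third vertex $V$, and claim $CC_T(A,B)=CC_T(A,P_r)+CC_T(A,V)$ on the grounds that the label propagation from $A$ enters $\Delta$ through the arc $P_rV$. But $V$ need not lie on the $Q$-side of the polygon: it can be one of the $P_i$ with $i<r$, and then the edge of $\Delta$ separating it from $A$ is $VB$, not $P_rV$, so $B$ gets labeled \emph{before} $P_r$ and the triangle relation reads $CC_T(A,P_r)=CC_T(A,V)+CC_T(A,B)$ instead. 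Concretely, take the pentagon with vertices $A,P_1,P_2,B,Q_1$ in cyclic order, triangulated by the diagonals $P_1B$ and $P_1Q_1$. The CC-labels from $A$ are $P_1=1$, $Q_1=1$, $B=2$, $P_2=3$; here $\Delta=P_1P_2B$ and $V=P_1$, so your identity would give $CC_T(A,B)=CC_T(A,P_2)+CC_T(A,P_1)=4$, whereas $CC_T(A,B)=2$.

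The same example breaks the BCI half of your decomposition: the $r$-tuples whose last entry is $\Delta$ correspond to the $(r-1)$-tuples for the walk $A,P_1,\dots,P_r$ that \emph{avoid} $\Delta$, not to all of them, because $\Delta$ may contain an earlier $P_i$ and hence occur at an earlier position (in the pentagon there are $2$ tuples with last entry $\Delta$, while $CC_T(A,P_2)=3$). Moreover the remaining contribution --- ``the tuples with $T^{(r)}\neq\Delta$ must, by the target equality, contribute $CC_T(A,V)$'' --- is asserted rather than proved, so even apart from the counterexample the induction is not closed. A correct inductive proof has to treat the position of $V$ (on the $P$-walk versus the $Q$-walk) differently, or proceed as Broline, Crowe and Isaacs do by removing an ear of the triangulation (a vertex incident to a single triangle) and comparing counts before and after; alternatively one can show that both counts satisfy the tridiagonal continuant recurrence of Corollary~\ref{cor:recurrence}(b) in the quiddity entries, which determines them uniquely.
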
 
As a direct consequence, we get the following result that appeared in \cite{CC73-1,CC73-2} (compare with Examples~\ref{ex:seven-gon} and \ref{ex:seven-gon 2}). As we will see in Section~\ref{section:BPT}, this is precisely the idea that was adopted by Baur, Parsons and Tschabold in order to associate an infinite frieze to any triangulation of the infinite strip.

\begin{corollary}\label{cor:CC_quiddity}
Let $T$ be a triangulation of the regular $n$-gon, where $n\geq 3$. Let the vertices of the $n$-gon be denoted by $1, 2, 3\dots, n$. Then, for every $i\in\{1, 2, 3\dots, n\}$, we have that $CC_T(i,i+2)$ corresponds to the number of triangles incident to vertex $i+1$, where $i, i+1$ and $i+2$ are taken modulo $n$.  Consequently, the frieze pattern associated to $T$ is completely determined by the number of triangles incident to each vertex of $T$.
\end{corollary}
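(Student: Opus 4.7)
Proof plan: The statement has two parts, and both follow quickly from the machinery already assembled.

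For the first assertion, I would invoke the BCI-counting method together with Proposition~\ref{prop:BCI}, applied to the shortest walk from vertex $i$ to vertex $i+2$, namely the walk $i, i+1, i+2$ of length two with a single intermediate vertex $P_1 = i+1$. By definition, a $1$-tuple for this walk is just a single triangle of $T$ incident to the vertex $i+1$ (the no-repetition condition is vacuous when $r=1$). Therefore $BCI_T(i, i+1, i+2)$ equals the number of triangles of $T$ incident to the vertex $i+1$, and Proposition~\ref{prop:BCI} identifies this number with $CC_T(i, i+2) = f_T(i, i+2)$. The arithmetic is all done modulo $n$.

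For the \emph{consequently} clause, I would verify that the sequence of values $f_T(i, i+2)$ — the entries on the diagonal $j-i=2$, sometimes called the \emph{quiddity} — together with condition~(a) determines every other entry of the frieze pattern. The diagonal $j-i=1$ consists of $1$'s by~(a). Condition~(b), rewritten as
\[
f_T(i, i+k+1)\,f_T(i+1, i+k) \;=\; f_T(i, i+k)\,f_T(i+1, i+k+1) - 1,
\]
recovers the diagonal $j-i=k+1$ from those of indices $k-1$ and $k$. Since the frieze takes positive integer values, the factor $f_T(i+1, i+k)$ is nonzero and the recurrence is unambiguous. An induction on $k \geq 1$ then propagates the data from the first two diagonals to all diagonals indexed by $j-i < n$; together with the closing condition $f_T(i, i+n-1)=1$ imposed by~(a), this pins down the whole frieze.

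No real obstacle is foreseen; the only point worth checking is that the inductive recurrence above is well-defined, which follows from the positivity of the entries of a frieze pattern. Combined with the first assertion, this shows that the list of triangle counts at the vertices of $T$ — equivalently, the quiddity diagonal — determines $f_T$ completely.
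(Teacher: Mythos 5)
Your proof is correct and follows exactly the route the paper intends: the first claim is the case $r=1$ of the BCI-counting method combined with Proposition~\ref{prop:BCI} (the paper states the corollary as a direct consequence of that proposition without further argument), and the second claim is the standard observation that the quiddity diagonal together with the boundary of ones determines the frieze via the unimodular rule. Your filling-in of the recurrence and the positivity check for the denominator is a legitimate elaboration of what the paper leaves implicit.
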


\subsection{Infinite friezes}

\begin{definition}\label{def:infinite frieze}
An \textbf{infinite frieze} is a map $t:\mathbb{Z}\times\mathbb{Z}\rightarrow \mathbb{Z}$ such that:
\begin{enumerate}
\item[(a)] $t(i,j)=0$ if $i=j$,
\item[(b)] $t(i,j)\geq 1$ if $i<j$, and $t(i,j)=1$ if $i=j-1$, 
\item[(c)] $t(i,j)=-t(j,i)$ for all $(i,j)\in \mathbb{Z}\times\mathbb{Z}$,
\item[(d)] The \textbf{unimodular rule} holds, that is, $\left|\begin{array}{cc} t(i,j) & t(i, j+1) \\ t(i+1,j) & t(i+1, j+1)\end{array}\right|=1$ for all $(i,j)\in \mathbb{Z}\times\mathbb{Z}$.
\end{enumerate}
\end{definition}

Consequently, an infinite frieze can be interpreted as a bi-infinite antisymmetric matrix of integers satisfying the unimodular rule such that the first diagonal above the main diagonal contains only $1$'s, and in which the number $t(i,j)$ is located at the intersection of the $i$-th row and the $j$-th column.

\begin{example}\label{ex:tilings}
Here is an obvious example of an infinite frieze, in which we labeled the rows and columns for the convenience of the reader.
 {\em
\begin{center}
  \begin{tikzpicture}[auto]

    \matrix
    {
			&&\node{{\em (-5)}};&\node{{\em (-4)}};&\node{{\em (-3)}};&\node{{\em (-2)}};&\node{{\em (-1)}};&\node{{\em (0)}};&\node{{\em (1)}};& \node{{\em (2)}}; &\node{{\em (3)}};&\node{{\em (4)}};&\node{{\em (5)}};& \\
      &&&&&&& \node{$\vdots$}; &&&&&& \\
			 \node{{\em (-5)}}; && \node {0}; & \node {1}; & \node {2}; & \node {3}; & \node {4}; & \node {5}; & \node {6}; & \node {7};& \node {8};& \node {9};& \node {10};\\
			\node{{\em (-4)}}; && \node {-1}; &\node {0}; & \node {1}; & \node {2}; & \node {3}; & \node {4}; & \node {5}; & \node {6};& \node {7}; &\node {8}; &\node {9}; & \\
      \node{{\em (-3)}}; && \node {-2}; &\node {-1}; &\node {0}; & \node {1}; & \node {2}; & \node {3}; & \node {4}; & \node {5}; & \node {6}; &\node {7}; &\node {8}; &\\
      \node{{\em (-2)}};&& \node {-3}; &\node {-2}; &\node {-1}; & \node {0}; & \node {1}; & \node {2}; & \node {3}; & \node {4}; & \node {5}; &\node {6}; &\node {7}; &\\
      \node{{\em (-1)}};&& \node {-4}; &\node {-3}; &\node {-2}; & \node {-1}; & \node {0}; & \node {1}; & \node {2}; & \node {3}; & \node {4};&\node {5}; &\node {6}; &\\
      \node{{\em (0)}};&\node{$\cdots$};&\node {-5}; &\node {-4}; & \node {-3}; & \node {-2}; & \node {-1}; & \node {0}; & \node {1}; & \node {2}; & \node {3};&\node {4}; &\node {5}; &\node{$\cdots$};\\
      \node{{\em (1)}};&& \node {-6}; &\node {-5}; &\node {-4}; & \node {-3}; & \node {-2}; & \node {-1}; & \node {0}; & \node {1}; & \node {2};&\node {3}; &\node {4}; &\\
      \node{{\em (2)}};&& \node {-7}; &\node {-6}; &\node {-5}; & \node {-4}; & \node {-3}; & \node {-2}; & \node {-1}; & \node {0}; & \node {1};&\node {2}; &\node {3}; &\\
      \node{{\em (3)}};&& \node {-8}; &\node {-7}; &\node {-6}; & \node {-5}; & \node {-4}; & \node {-3}; & \node {-2}; & \node {-1}; & \node {0};&\node {1}; &\node {2}; &\\
      \node{{\em (4)}};&& \node {-9}; &\node {-8}; &\node {-7}; & \node {-6}; & \node {-5}; & \node {-4}; & \node {-3}; & \node {-2}; & \node {-1};&\node {0}; &\node {1}; &\\
      \node{{\em (5)}};&& \node {-10}; & \node {-9}; &\node {-8}; &\node {-7}; & \node {-6}; & \node {-5}; & \node {-4}; & \node {-3}; & \node {-2}; & \node {-1};&\node {0}; &\\
      &&&&&&& \node{$\vdots$}; &&&&&& \\
    };
  \end{tikzpicture} 
\end{center}
}
\end{example}

Suppose that $t$ is an infinite frieze.  Following the notations of Conway and Coxeter \cite{CC73-1, CC73-2} and Holm and J{\o}rgensen \cite{HJ13}, we will set, for each $i,j\in\mathbb{Z}$:
\begin{itemize}
\item $a_i=t(i-1, i+1)$,
\item $f_i=t(-1, i)$,
\item $g_i=t(0,i)$,
\item $c_{ij}=\left|\begin{array}{cc} t(i,k) & t(i, k+1) \\ t(j,k) & t(j, k+1)\end{array}\right|$, for some $k\in\mathbb{Z}$,
\item $d_{ij}=\left|\begin{array}{cc} t(k,i) & t(k, j) \\ t(k+1,i) & t(k+1, j)\end{array}\right|$, for some $k\in\mathbb{Z}$.
\end{itemize}
At this point, it is important to observe that the numbers $c_{ij}$ and $d_{ij}$ are independent of the choice of $k$ by \cite[(Proposition 11.2)]{R12}. Notice also that the $f_i$'s and the $g_i$'s form the $-1$-th and $0$-th rows of the infinite frieze, respectively, while the $a_i$'s form the diagonal above the diagonal of ones, as depicted in the illustrative figure below. 

\begin{center}
  \begin{tikzpicture}[auto]

    \matrix
    {
			&&\node{{\em (-4)}};&\node{{\em (-3)}};&\node{{\em (-2)}};&\node{{\em (-1)}};& \node{{\em (0)}}; &\node{{\em (1)}};&\node{{\em (2)}};&\node{{\em (3)}};&\node{{\em (4)}};& \\
      &&&&& \node{$\vdots$}; &&&& \\
%
%
      \node{{\em (-4)}};&&  \node {0}; & \node {1}; & \node {$a_{-3}$}; & \node {$*$}; & \node {$*$}; & \node {$*$}; & \node {$*$}; & \node {$*$}; & \node {$*$};  \\
      \node{{\em (-3)}};&&  \node {-1}; & \node {0}; & \node {1}; & \node {$a_{-2}$}; & \node {$*$}; & \node {$*$}; & \node {$*$}; & \node {$*$}; & \node {$*$};  \\
      \node{{\em (-2)}};&&  \node {$-a_{-3}$}; & \node {-1}; & \node {0}; & \node {1}; & \node {$a_{-1}$}; & \node {$*$}; & \node {$*$}; & \node {$*$}; & \node {$*$};  \\
      \node{{\em (-1)}};&&  \node {$f_{-4}$}; & \node {$f_{-3}=-a_{-2}$}; & \node {$f_{-2}=-1$}; & \node {$f_{-1}=0$}; & \node {$f_0=1$}; & \node {$f_1=a_0$}; & \node {$f_2$}; & \node {$f_3$}; & \node {$f_4$};  \\
      \node{{\em (0)}};&\node{$\cdots$}; & \node {$g_{-4}$}; & \node {$g_{-3}$}; & \node {$g_{-2}=-a_{-1}$}; & \node {$g_{-1}=-1$}; & \node {$g_0=0$}; & \node {$g_1=1$}; & \node {$g_2=a_1$}; & \node {$g_3$}; & \node {$g_4$}; & \node{$\cdots$};\\
      \node{{\em (1)}};&& \node {$*$}; & \node {$*$}; & \node {$*$}; & \node {$-a_0$}; & \node {1}; & \node {0}; & \node {1}; & \node {$a_2$}; & \node {$*$};  \\
      \node{{\em (2)}};&&  \node {$*$}; & \node {$*$}; & \node {$*$}; & \node {$*$}; & \node {$-a_1$}; & \node {-1}; & \node {0}; & \node {1}; & \node {$a_3$};  \\
      \node{{\em (3)}};&&  \node {$*$}; & \node {$*$}; & \node {$*$}; & \node {$*$}; & \node {$*$}; & \node {$-a_2$}; & \node {-1}; & \node {0}; & \node {1};  \\
      \node{{\em (4)}};&&  \node {$*$}; & \node {$*$}; & \node {$*$}; & \node {$*$}; & \node {$*$}; & \node {$*$}; & \node {$-a_3$}; & \node {-1}; & \node {0};  \\
%
%
      &&&&&& \node{$\vdots$}; &&&&& \\
    };

  \end{tikzpicture} 
\end{center}

Clearly, every infinite frieze is uniquely determined by the sequence $(a_i)_{i\in\mathbb{Z}}$ via the unimodular rule.  Using the terminology of \cite{BPT15}, we will call this sequence the \textbf{quiddity sequence} of the infinite frieze.

Our first aim is to show that each number $t(i,j)$ can be expressed in terms of the $f_i$'s and $g_j$'s; compare with (17) and (32) in \cite{CC73-1,CC73-2}.

\begin{lemma}\label{lem:position}
Let $t$ is an infinite frieze.  Then, for all $p,q\in\mathbb{Z}$,
\[
t(
p,q)=\left|\begin{array}{cc} f_p & f_q \\ g_p & g_q\end{array}\right|.
\]
\end{lemma}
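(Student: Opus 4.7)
The plan is to show that, for each fixed $p \in \mathbb{Z}$, both sides of the identity — viewed as functions of $q$ — satisfy the same second-order linear recurrence, and then to verify that they agree at two consecutive values of $q$.

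The main step is to establish the three-term recurrence
\[
t(p, q-1) + t(p, q+1) = a_q\, t(p, q) \qquad \text{for all } p, q \in \mathbb{Z}.
\]
This will follow from the unimodular rule: applying condition (d) at positions $(p-1, q-1)$ and $(p-1, q)$, one can solve for $t(p, q-1)$ and $t(p, q+1)$ respectively and add, obtaining
\[
\frac{t(p, q-1) + t(p, q+1)}{t(p, q)} = \frac{t(p-1, q-1) + t(p-1, q+1)}{t(p-1, q)}
\]
whenever the denominators are nonzero, so the ratio is independent of $p$. Evaluating at $p = q - 1$, where $t(q-1, q-1) = 0$, $t(q-1, q) = 1$, and $t(q-1, q+1) = a_q$ by definition, identifies the common value as $a_q$. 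The exceptional cases $p \in \{q, q+1\}$, where one of the relevant entries vanishes, are checked directly using antisymmetry: for instance at $p = q$ one has $t(q, q-1) + t(q, q+1) = -1 + 1 = 0 = a_q \cdot t(q,q)$. This derivation is the main technical obstacle; the remainder of the argument is bookkeeping.

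Applied at $p = -1$ and $p = 0$, the recurrence shows that the sequences $(f_q)$ and $(g_q)$ both satisfy $u_{q-1} + u_{q+1} = a_q u_q$, so by bilinearity the function $\varphi(p, q) := f_p g_q - f_q g_p$ satisfies the same recurrence in $q$ for any fixed $p$. It then remains to verify $t(p, q) = \varphi(p, q)$ at two consecutive values of $q$; this follows from condition (c) using $f_{-1} = 0$, $g_{-1} = -1$, $f_0 = 1$, $g_0 = 0$, yielding $\varphi(p, -1) = -f_p = -t(-1, p) = t(p, -1)$ and $\varphi(p, 0) = -g_p = -t(0, p) = t(p, 0)$. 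A straightforward induction on $|q|$ in both directions then yields the identity for all $q \in \mathbb{Z}$.
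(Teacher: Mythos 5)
Your proof is correct, but it takes a genuinely different route from the paper. The paper deduces the identity from the $SL_2$-tiling factorization
$\left|\begin{smallmatrix} t(i,p) & t(i,q) \\ t(j,p) & t(j,q)\end{smallmatrix}\right| = c_{ij}d_{pq}$
(quoting \cite[Proposition 5.7]{HJ13}, with the well-definedness of $c_{ij}$, $d_{pq}$ from \cite{R12}), then specializes $k=p$, $i=-1$, $j=0$; it is short but leans on external results. You instead give a self-contained argument: you derive the three-term relation $t(p,q-1)+t(p,q+1)=a_q\,t(p,q)$ directly from the unimodular rule and antisymmetry, observe that $q\mapsto f_pg_q-f_qg_p$ satisfies the same linear recurrence as $q\mapsto t(p,q)$, and match initial data at $q=-1,0$. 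This buys elementarity and also establishes, directly from the axioms, the recurrence that the paper only obtains later (Corollary~\ref{cor:recurrence}(a), via Proposition~\ref{prop:ptolemy}), so there is no circularity. One point you should make explicit: the ratio $\bigl(t(p,q-1)+t(p,q+1)\bigr)/t(p,q)$ is only shown constant on the two ranges $p\le q-1$ and $p\ge q+1$ separately, since the chain linking rows $p$ and $p-1$ breaks when $p\in\{q,q+1\}$; evaluating at $p=q-1$ anchors the lower range, while your direct verification at $p=q+1$ (where $t(q+1,q-1)+t(q+1,q+1)=-a_q=a_q\,t(q+1,q)$) is what anchors the upper range, with $p=q$ checked separately. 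With that bookkeeping spelled out, the argument is complete.
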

\begin{proof}
In \cite[Proposition 5.7]{HJ13}, it is demonstrated that, for all $i,j,p,q\in\mathbb{Z}$ satisfying $i<j$ and $p<q$, we have 
\[
\left|\begin{array}{cc} t(i,p) & t(i,q) \\ t(j,p) & t(j,q)\end{array}\right|=c_{ij}d_{pq}.
\]
Actually, the proof is straightforward and one can easily verify that the assumptions $i<j$ and $p<q$ are not necessary for the proof, and neither is the positivity of the entries. Therefore this relation holds for all $i,j,p,q\in \mathbb{Z}$.  Taking $k=p$ in the definition of $c_{ij}$ and $d_{ij}$, one then gets
\[
\left|\begin{array}{cc} t(i,p) & t(i,q) \\ t(j,p) & t(j,q)\end{array}\right|=\left|\begin{array}{cc} t(i,p) & t(i,p+1) \\ t(j,p) & t(j,p+1)\end{array}\right|\cdot \left|\begin{array}{cc} t(p,p) & t(p,q) \\ t(p+1,p) & t(p+1,q)\end{array}\right|.
\]
Now, letting $i=-1$ and $j=0$ gives
\[
\begin{array}{ccl}
\left|\begin{array}{cc} f_p & f_q \\ g_p & g_q\end{array}\right| &=&\left|\begin{array}{cc} f_p & f_{p+1} \\ g_p & g_{p+1}\end{array}\right|\cdot \left|\begin{array}{cc} t(p,p) & t(p,q) \\ t(p+1,p) & t(p+1,q)\end{array}\right|\\[4mm]
& = & 
\left|\begin{array}{cc} f_p & f_{p+1} \\ g_p & g_{p+1}\end{array}\right|\cdot \left|\begin{array}{cc} 0 & t(p,q) \\ -1 & t(p+1,q)\end{array}\right|\\ [3mm]
& = & 
t(p,q)
\end{array}
\]
since $\displaystyle\left|\begin{array}{cc} f_p & f_{p+1} \\ g_p & g_{p+1}\end{array}\right|=1$ by the unimodular rule.
\end{proof}

Consequently, an infinite frieze is determined as long as two adjacent rows are filled up.  The next two results show that the two rows do not need to be adjacent.

\begin{proposition}\label{prop:ptolemy}
Let $t$ be an infinite frieze.  For all $i, j, p, q \in\mathbb{Z}$, we have
\[
t(i,p)t(j,q)=t(i,j)t(p,q)+t(i,q)t(j,p).
\]
\end{proposition}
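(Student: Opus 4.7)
The plan is to reduce the identity to a classical Plücker-type relation by means of Lemma~\ref{lem:position}. The key observation is that the lemma expresses every entry $t(p,q)$ as a $2\times 2$ determinant in the fixed rows $f_\bullet$ and $g_\bullet$. So if we set $v_k = (f_k, g_k)^{\top}$ for every $k\in\mathbb{Z}$, then Lemma~\ref{lem:position} says exactly that
\[
t(p,q)=\det(v_p,v_q),
\]
where $\det(v_p,v_q)$ denotes the determinant of the $2\times 2$ matrix with columns $v_p$ and $v_q$.

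With this translation, the identity we must prove reads
\[
\det(v_i,v_p)\det(v_j,v_q)=\det(v_i,v_j)\det(v_p,v_q)+\det(v_i,v_q)\det(v_j,v_p),
\]
which is the three-term Plücker relation for $2\times 2$ minors (equivalently, the Desnanot--Jacobi identity in the simplest case). So my plan is:

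First, invoke Lemma~\ref{lem:position} to rewrite each of the six factors in the desired identity as $f_\bullet g_\bullet - f_\bullet g_\bullet$. Second, verify the resulting quadrilinear identity in $f_i, f_j, f_p, f_q, g_i, g_j, g_p, g_q$ by direct expansion: expand $(f_ig_p-f_pg_i)(f_jg_q-f_qg_j)$ on the left, expand the two products on the right, and observe that four of the eight terms on the right cancel in pairs (namely $f_if_pg_jg_q$ with $-f_if_pg_jg_q$ and $f_jf_qg_ig_p$ with $-f_jf_qg_ig_p$), leaving exactly the four monomials that make up the left-hand side. Since this is a purely algebraic identity over $\mathbb{Z}$ with no positivity or ordering hypothesis needed, it applies to arbitrary $i,j,p,q\in\mathbb{Z}$.

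There is really no obstacle here beyond keeping the bookkeeping straight: the heart of the matter was already done in Lemma~\ref{lem:position}, which brought the infinite frieze into rank-one-like form $t(p,q)=\det(v_p,v_q)$, and once this is available the Ptolemy relation is a formal consequence of the Plücker identity for $2\times 2$ minors of a $2\times\infty$ matrix. If one prefers to avoid the direct expansion, an equivalent slick proof is to note that any four vectors in $\mathbb{Z}^2$ satisfy the three-term Plücker relation, since the Grassmannian $\mathrm{Gr}(2,4)$ is cut out by exactly this quadric.
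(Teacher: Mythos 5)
Your proposal is correct and follows essentially the same route as the paper: rewrite each entry via Lemma~\ref{lem:position} as a $2\times 2$ determinant in the vectors $(f_k,g_k)$ and then invoke the three-term Pl\"ucker (Desnanot--Jacobi) identity, which the paper likewise states as ``elementary to verify.'' Your explicit expansion and cancellation check simply fills in the verification the paper leaves to the reader, so there is nothing to correct.
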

\begin{proof}
It is elementary to verify (compare with \cite[(Remark 5.3)]{HJ13}) that
\[
\left|\begin{array}{cc} f_i & f_p \\ g_i & g_p\end{array}\right|\cdot \left|\begin{array}{cc} f_j & f_q \\ g_j & g_q\end{array}\right| = \left|\begin{array}{cc} f_i & f_j \\ g_i & g_j\end{array}\right|\cdot \left|\begin{array}{cc} f_p & f_q \\ g_p & g_q\end{array}\right| + \left|\begin{array}{cc} f_i & f_q \\ g_i & g_q\end{array}\right|\cdot \left|\begin{array}{cc} f_j & f_p \\ g_j & g_p\end{array}\right|.
\]
Consequently, it follows from Lemma \ref{lem:position} that
\[
t(i,p)t(j,q)=t(i,j)t(p,q)+t(i,q)t(j,p),
\]
\end{proof}

\begin{corollary}\label{cor:fraction}
Let $t$ be an infinite frieze.  Let $i,j\in\mathbb{Z}$ be such that $i\neq j$. For all $p,q\in\mathbb{Z}$, we have:
\[
t(p,q)=\frac{t(i,p)t(j,q)-t(i,q)t(j,p)}{t(i,j)}.
\]
In particular, the infinite frieze is uniquely determined by its $i$-th and $j$-th rows when $i\neq j$.
\end{corollary}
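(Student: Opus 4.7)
The proof plan is short because Proposition~\ref{prop:ptolemy} does essentially all the work: the displayed identity is just its algebraic rearrangement. The plan is as follows.

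First, observe that the hypothesis $i\neq j$ guarantees $t(i,j)\neq 0$, so division is legitimate. Indeed, by condition~(a) in Definition~\ref{def:infinite frieze}, $t(i,j)=0$ only when $i=j$; more precisely, if $i<j$ then condition~(b) gives $t(i,j)\geq 1$, and if $i>j$ then condition~(c) combined with (b) gives $t(i,j)=-t(j,i)\leq -1$. Either way, $t(i,j)$ is a nonzero integer whenever $i\neq j$.

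Next, apply Proposition~\ref{prop:ptolemy} to the four indices $i,j,p,q$ to obtain
\[
t(i,p)t(j,q)=t(i,j)t(p,q)+t(i,q)t(j,p).
\]
Isolating $t(i,j)t(p,q)$ and dividing both sides by the nonzero quantity $t(i,j)$ yields the claimed formula
\[
t(p,q)=\frac{t(i,p)t(j,q)-t(i,q)t(j,p)}{t(i,j)}.
\]

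For the concluding assertion, suppose the $i$-th and $j$-th rows of the infinite frieze are prescribed, i.e.\ the values $t(i,m)$ and $t(j,m)$ are known for every $m\in\mathbb{Z}$. The quantity $t(i,j)$ is then known (and nonzero by the first step), and for any $p,q\in\mathbb{Z}$ the four numbers $t(i,p),t(i,q),t(j,p),t(j,q)$ appearing on the right-hand side are determined by the given rows, so the formula above specifies $t(p,q)$ uniquely. There is no real obstacle here; the only point requiring minor care is the nonvanishing of $t(i,j)$, which is handled by the first step.
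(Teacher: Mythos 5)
Your proposal is correct and follows exactly the paper's argument: the identity is Proposition~\ref{prop:ptolemy} rearranged, with the division justified because $i\neq j$ forces $t(i,j)\neq 0$ by the definition of an infinite frieze. The extra care you take in spelling out the nonvanishing and the uniqueness conclusion is fine but not a different route.
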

\begin{proof}
This follows directly from Proposition \ref{prop:ptolemy} as $i\neq j$ if and only if $t(i,j)\neq 0$ by definition of an infinite frieze.
\end{proof}

\begin{remark}
As a complement to the above corollary, let us mention that a similar approach, using the columns instead of the rows, leads to
\[
t(p,q)=\frac{t(p,i)t(q,j)-t(p,j)t(q,i)}{t(i,j)}
\]
so that the $i$-th and $j$-th columns uniquely determine the infinite frieze when $i\neq j$.
\end{remark}

The following result, obtained in \cite[Lemma 1.7]{T15}, is an easy consequence of Proposition \ref{prop:ptolemy}.  We omit its proof.

\begin{corollary}[Compare with (18) and (24) in \cite{CC73-1, CC73-2}] \label{cor:recurrence}
Let $t$ is an infinite frieze.  
\begin{enumerate}
\item[(a)] For all $p,q\in\mathbb{Z}$, we have 
\[
t(p,q)a_q-t(p,q-1)=t(p,q+1).
\]
In particular, $a_qf_q-f_{q-1}=f_{q+1}$ for all $q\in \mathbb{Z}$.
\item[(b)] If $p,q\in\mathbb{Z}$, with $q\geq p+2$, then
\[
t(p,q)=\left|\begin{array}{cccccc}
a_{p+1} & 1 & 0 &\cdots & 0 & 0\\
1 & a_{p+2} & 1  & \cdots & 0 & 0\\
0 & 1 & a_{p+3} & \cdots & 0 & 0 \\
\vdots & \vdots & \vdots& \ddots& \vdots& \vdots\\
0 & 0 & 0 & \cdots & a_{q-2} & 1\\
0 & 0 & 0 & \cdots & 1 & a_{q-1}
\end{array}\right|
\]
In particular, if $q\geq 1$,
\[
f_q=\left|\begin{array}{cccccc}
a_{0} & 1 & 0 &\cdots & 0 & 0\\
1 & a_{1} & 1  & \cdots & 0 & 0\\
0 & 1 & a_{2} & \cdots & 0 & 0 \\
\vdots & \vdots & \vdots& \ddots& \vdots& \vdots\\
0 & 0 & 0 & \cdots & a_{q-2} & 1\\
0 & 0 & 0 & \cdots & 1 & a_{q-1}
\end{array}\right|.
\]
\end{enumerate}
\end{corollary}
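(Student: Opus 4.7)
The plan is to derive (a) as a direct specialization of Proposition \ref{prop:ptolemy} and then to obtain (b) by induction on $q-p$, invoking the recurrence in (a) together with the standard expansion of a tridiagonal (continuant) determinant along its last row.

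For part (a), I would apply the Ptolemy-type identity of Proposition \ref{prop:ptolemy} after renaming its variables to avoid a clash with the corollary's notation; explicitly, I would substitute the proposition's $(i,j,p,q)$ with $(p,\,q,\,q-1,\,q+1)$, giving
\[
t(p,q-1)\,t(q,q+1) \;=\; t(p,q)\,t(q-1,q+1) \,+\, t(p,q+1)\,t(q,q-1).
\]
Combining $t(q-1,q)=t(q,q+1)=1$, the antisymmetry $t(q,q-1)=-1$, and $t(q-1,q+1)=a_q$ by definition collapses this to $t(p,q-1) = a_q\, t(p,q) - t(p,q+1)$, which is the stated recurrence after rearranging. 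The ``in particular'' assertion then follows by specializing $p=-1$.

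For part (b), I would induct on $q-p\geq 2$. The base case $q=p+2$ amounts to $t(p,p+2)=a_{p+1}$, which is immediate from the definition and matches the $1\times 1$ continuant; it is convenient to record simultaneously that $t(p,p+1)=1$ matches the value $1$ assigned by convention to the empty continuant, since both identities are needed to launch the recursion. For $q\geq p+3$, part (a) applied with the index $q$ replaced by $q-1$ gives
\[
t(p,q) \;=\; a_{q-1}\, t(p,q-1) \,-\, t(p,q-2).
\]
By the induction hypothesis, $t(p,q-1)$ and $t(p,q-2)$ are the continuants with diagonals $a_{p+1},\dots,a_{q-2}$ and $a_{p+1},\dots,a_{q-3}$ respectively, and expanding the $(q-p-1)\times(q-p-1)$ tridiagonal determinant in the statement along its last row (or column) produces exactly $a_{q-1}$ times the former continuant minus the latter. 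This closes the induction, and the ``in particular'' statement again follows by taking $p=-1$.

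The argument is entirely mechanical once the recurrence in (a) is secured; the only minor subtlety is performing the correct index substitution into Proposition \ref{prop:ptolemy} in part (a), since the proposition's dummy variables collide with those appearing in the statement of the corollary.
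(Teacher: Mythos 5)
Your argument is correct: the specialization of Proposition~\ref{prop:ptolemy} with $(i,j,p,q)\mapsto(p,q,q-1,q+1)$, together with $t(q-1,q)=t(q,q+1)=1$, $t(q,q-1)=-1$ and $t(q-1,q+1)=a_q$, yields exactly part (a), which is precisely the route the paper indicates when it calls the corollary an easy consequence of that proposition (and omits the details), and your induction for (b) via the last-row expansion of the tridiagonal determinant, with the empty and $1\times 1$ continuants anchoring the base cases, is the standard completion of that argument. No gaps.
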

%
%

\begin{remark}
It follows from part (a) of the above corollary that
\[
a_s=\frac{f_{s-1}+f_{s+1}}{f_s}
\]
for all $s\neq -1$ as $f_s=0$ if and only if $s=-1$. Consequently, $f_s$ divides $f_{s-1}+f_{s+1}$ for all $s\neq -1$. Moreover, since an infinite frieze is uniquely determined by its quiddity sequence $(a_i)_{i\in\mathbb{Z}}$, it follows that an infinite frieze is uniquely determined by the $f_i$'s and $a_{-1}$, or more generally by its $s$-th row and $a_s$. Unlike the case of frieze patterns, an infinite frieze is not uniquely determined by the $f_i$'s.  The knowledge of $a_{-1}$ is crucial. Indeed, consider the following infinite frieze:
{\em
\begin{center}
  \begin{tikzpicture}[auto]

    \matrix
    {
			&&\node{{\em (-5)}};&\node{{\em (-4)}};&\node{{\em (-3)}};&\node{{\em (-2)}};&\node{{\em (-1)}};&\node{{\em (0)}};&\node{{\em (1)}};& \node{{\em (2)}}; &\node{{\em (3)}};&\node{{\em (4)}};&\node{{\em (5)}};& \\
      &&&&&&& \node{$\vdots$}; &&&&&& \\
			 \node{{\em (-5)}}; && \node {0}; & \node {1}; & \node {2}; & \node {3}; & \node {4}; & \node {9}; & \node {14}; & \node {19
		};& \node {24};& \node {29};& \node {34};\\
			\node{{\em (-4)}}; && \node {-1}; &\node {0}; & \node {1}; & \node {2}; & \node {3}; & \node {7}; & \node {11}; & \node {15};& \node {19}; &\node {23}; &\node {27}; & \\
      \node{{\em (-3)}}; && \node {-2}; &\node {-1}; &\node {0}; & \node {1}; & \node {2}; & \node {5}; & \node {8}; & \node {11}; & \node {14}; &\node {17}; &\node {20}; &\\
      \node{{\em (-2)}};&& \node {-3}; &\node {-2}; &\node {-1}; & \node {0}; & \node {1}; & \node {3}; & \node {5}; & \node {7}; & \node {9}; &\node {11}; &\node {13}; &\\
      \node{{\em (-1)}};&& \node {-4}; &\node {-3}; &\node {-2}; & \node {-1}; & \node {0}; & \node {1}; & \node {2}; & \node {3}; & \node {4};&\node {5}; &\node {6}; &\\
      \node{{\em (0)}};&\node{$\cdots$};&\node {-9}; &\node {-7}; & \node {-5}; & \node {-3}; & \node {-1}; & \node {0}; & \node {1}; & \node {2}; & \node {3};&\node {4}; &\node {5}; &\node{$\cdots$};\\
      \node{{\em (1)}};&& \node {-14}; &\node {-11}; &\node {-8}; & \node {-5}; & \node {-2}; & \node {-1}; & \node {0}; & \node {1}; & \node {2};&\node {3}; &\node {4}; &\\
      \node{{\em (2)}};&& \node {-19}; &\node {-15}; &\node {-11}; & \node {-7}; & \node {-3}; & \node {-2}; & \node {-1}; & \node {0}; & \node {1};&\node {2}; &\node {3}; &\\
      \node{{\em (3)}};&& \node {-24}; &\node {-19}; &\node {-14}; & \node {-9}; & \node {-4}; & \node {-3}; & \node {-2}; & \node {-1}; & \node {0};&\node {1}; &\node {2}; &\\
      \node{{\em (4)}};&& \node {-29}; &\node {-23}; &\node {-17}; & \node {-11}; & \node {-5}; & \node {-4}; & \node {-3}; & \node {-2}; & \node {-1};&\node {0}; &\node {1}; &\\
      \node{{\em (5)}};&& \node {-34}; & \node {-27}; &\node {-20}; &\node {-13}; & \node {-6}; & \node {-5}; & \node {-4}; & \node {-3}; & \node {-2}; & \node {-1};&\node {0}; &\\
      &&&&&&& \node{$\vdots$}; &&&&&& \\
    };

  \end{tikzpicture} 
\end{center}
}
This infinite frieze has the same $-1$-th row (that is, the same $f_i$'s) as the infinite frieze of Example \ref{ex:tilings}, but is distinct from this latter.  Observe that both $a_{-1}=t(-2,0)$ are distinct.  Note that it follows from \cite[Corollary 2.2]{BPT15} that every choice of $a_{-1}$, with $a_{-1}\geq 2$, yields a different infinite frieze. 
\end{remark}


\section{Triangulations of the infinite strip}

In \cite{BPT15}, the authors considered the infinite strip of height one with marked points on the lower and upper boundaries, and showed that every infinite frieze can be realized as a triangulation of this strip.  However, this realization is not unique.  This is due, in part, to the existence of some asymptotic and generic arcs.  In this section, we adapt the definition of a triangulation of the infinite strip given in \cite{BPT15} and show, via an algorithm that is heavily inspired by Theorem 5.2 in \cite{BPT15}, that every triangulation of the infinite strip can be realized via such a triangulation.  This algorithm will play a crucial role in the proof of our main theorem. Finally, we introduce the Dehn twist equivalence and show that every entry in an infinite frieze can be obtained by using the CC-counting method and the BCI-counting method presented in Section~\ref{section:CC}.

\subsection{Triangulations of the strip}

Let $M_1=\mathbb{Z}$ and $M_2$ be a subset of $\mathbb{Z}$. Denote by $\mathbb{V}(M_1, M_2)$ the infinite strip of height one in the plane, with lower boundary $\{(x,0) \ | \ x\in\mathbb{R}\}$, upper boundary $\{(x,1) \ | \ x\in\mathbb{R}\}$, together with a set of marked points $\{(i,0) \ | \ i\in M_1\}$ on the lower boundary and a set of marked points $\{(i,1) \ | \ i\in M_2\}$ on the upper boundary.  By abuse of notation, we will sometimes identify these sets of marked points with $M_1$ and $M_2$. An \textbf{arc} in $\mathbb{V}(M_1, M_2)$ is a non-contractible curve whose endpoints are marked points from $M_1\cup M_2$, considered up to isotopy fixing endpoints. Two arcs are \textbf{compatible} if there are representatives in their isotopy classes that do not cross except, maybe, at their endpoints. We call an arc between marked points \textbf{bridging} (resp. \textbf{peripheral}) if its endpoints belong to different boundaries (resp. to the same boundary) of $\mathbb{V}(M_1, M_2)$.  Note that, by opposition to \cite{BPT15}, we do not consider here the limit points at the right and left extremities of the strip, nor the notions of generic and asymptotic arcs.

A \textbf{triangulation} of $\mathbb{V}(M_1, M_2)$ is a maximal (and necessarily infinite) collection of pairwise compatible arcs in $\mathbb{V}(M_1, M_2)$. Adapting the terminology of \cite{HJ12}, we say that a marked point $(k,i)$, with $i\in\{0,1\}$ (thus on the lower boundary or the upper boundary) of a triangulation of the strip is a {\bf left-fountain} if there exists an integer $k_0$ and infinitely many arcs in $T$ from $(k,i)$ to marked points to the left of $(j_0,0)$ or $(j_0,1)$. The notion of \textbf{right-fountain} is defined dually. By \cite{BPT15}, a triangulation of $\mathbb{V}(M_1, M_2)$ is called \textbf{admissible} if all marked points on the lower boundary are incident with only finitely many arcs, that is, if there is no left-fountain, nor right-fountain, on the lower boundary. Also, following \cite{GG14}, we say that a peripheral arc from $(i,0)$ to $(j,0)$ is \textbf{passing over} a set of marked points $(k_0,0), (k_1,0), \dots, (k_n,0)$ if $i\leq k_l\leq j$ for all $l\in\{1,2,\dots,n\}$.
Finally, following \cite{BCI74}, we say that a marked point is \textbf{special} if it is not incident to any arc in the triangulation.  Note that the boundary line segments joining two consecutive marked points on a same boundary of the strip $\mathbb{V}(M_1, M_2)$ are not considered as arcs of the triangulation.
\begin{example}
The triangulation $T_1$ of the infinite strip given by
\begin{center}
\includegraphics{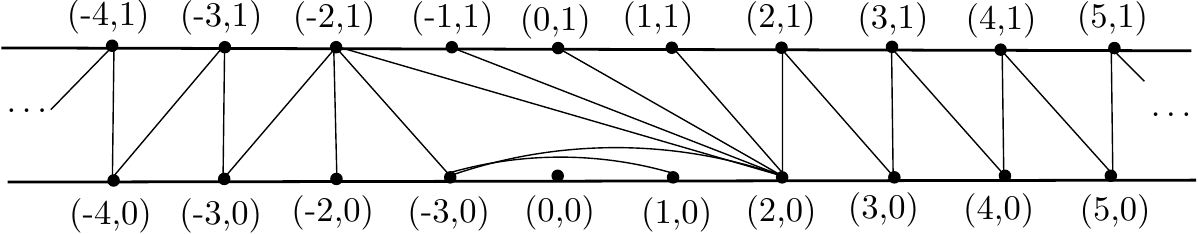}
\end{center}
is admissible with no marked point on the upper boundary, while the triangulation $T_2$ given by 
\begin{center}
\includegraphics{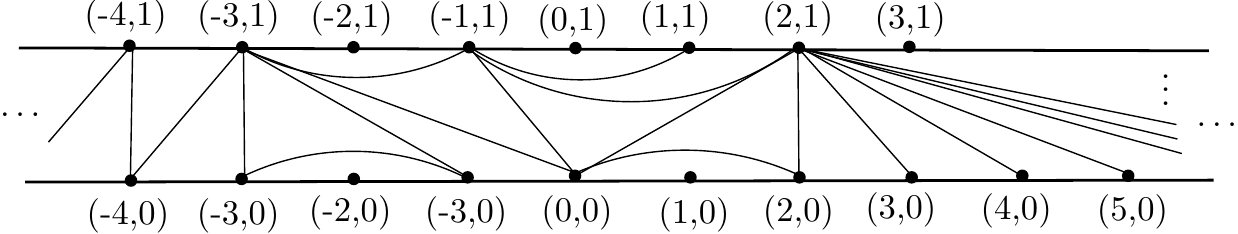}
\end{center}
is admissible, but as three special marked points on the upper boundary, namely $(-1,2)$, $(0,1)$ and $(3,1)$. Note that $(3,1)$ is a special marked point because $(2,1)$ is a right-fountain, and because $(3,1)$ is the rightmost marked point on the upper boundary.
\end{example}

We complete this section with two technical lemmata about the admissible triangulations that will be useful in the sequel.

\begin{lemma}\label{lem:bridging arcs}
Let $T$ be an admissible triangulation of the infinite strip $\mathbb{V}(M_1, M_2)$, for some subset $M_2$ of $\mathbb{Z}$. Assume moreover that there is no special marked points on the upper boundary. If $T$ contains a bridging arc, say from $(v,1)$ to $(j,0)$, then:
\begin{enumerate}
\item[(a)] There exists a bridging arc in $T$ from some $(u,1)$ to $(i,0)$, with $i<j$.
\item[(b)] There exists a bridging arc in $T$ from some $(u,1)$ to $(k,0)$, with $k>j$.
\end{enumerate}
\end{lemma}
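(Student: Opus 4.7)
The plan for part (a) is to inspect the triangle $\Delta_L$ of $T$ lying on the left side of the given bridging arc $\alpha$ from $(v,1)$ to $(j,0)$, that is, the triangle with $\alpha$ as one of its sides and whose interior lies on the side of $\alpha$ containing the marked points $(i,0)$ with $i<j$ and $(u,1)$ with $u<v$, $u\in M_2$. Since $\Delta_L$ is a triangle, its third vertex $P$ (the one distinct from $(v,1)$ and $(j,0)$) is a marked point lying on this left side, so either $P=(i,0)$ with $i<j$, or $P=(u,1)$ with $u\in M_2$ and $u<v$. In the first case the side of $\Delta_L$ joining $(v,1)$ to $P$ is already a bridging arc of $T$ of the form required in (a); in the second case the side of $\Delta_L$ joining $P$ to $(j,0)$ is itself a bridging arc of $T$ with the same lower endpoint $(j,0)$ but with a strictly smaller upper endpoint, and I would replace $\alpha$ by this new bridging arc and iterate.

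The main obstacle is to show that the iteration cannot remain in the second case forever. If it did, it would produce an infinite strictly decreasing sequence $v=v_0>v_1>v_2>\cdots$ of integers together with a corresponding sequence of distinct bridging arcs in $T$ joining $(v_k,1)$ to $(j,0)$. This would make the lower marked point $(j,0)$ incident to infinitely many arcs of $T$, contradicting the admissibility of $T$, which by definition requires every marked point on the lower boundary to be incident to only finitely many arcs. Hence the first case must occur after finitely many steps, which yields the desired arc for (a).

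For part (b), I would apply the entirely symmetric argument to the triangle $\Delta_R$ of $T$ on the right side of $\alpha$. Repeatedly landing in the analogue of the second case would then produce a strictly increasing sequence $v<v_1<v_2<\cdots$ of upper endpoints of distinct arcs of $T$ incident to $(j,0)$, and admissibility (this time ruling out a right-fountain at $(j,0)$) would once again force termination after finitely many steps, producing a bridging arc of $T$ with lower endpoint $(k,0)$ for some $k>j$.
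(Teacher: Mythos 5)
Your descent argument (pass to the neighbouring triangle, and if its third vertex lies on the upper boundary, replace $\alpha$ by the new bridging arc and repeat; admissibility of $T$ at $(j,0)$ forbids infinitely many distinct arcs at $(j,0)$, so the process stops) is sound as far as it goes, and the termination step does use admissibility correctly. But it rests on an assertion you never justify, namely the existence of ``the triangle $\Delta_L$ of $T$ lying on the left side of $\alpha$''. In this paper a triangulation of $\mathbb{V}(M_1,M_2)$ is only a \emph{maximal collection of pairwise compatible arcs}, and for such infinite collections it is simply not automatic that every arc bounds a triangle of $T$ on each side: complementary regions can be quadrilaterals or unbounded regions whose frontier meets infinitely many arcs. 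For instance, if $(j,0)$ were allowed to be a left-fountain (non-admissible case), the region to the left of $\alpha$ has no third vertex at all; and if there were a special marked point on the upper boundary between the upper endpoints of $\alpha$ and of the next bridging arc at $(j,0)$, the region to the left of $\alpha$ would have that extra vertex and would not be a triangle. So the existence of $\Delta_L$ is exactly where all three hypotheses enter --- admissibility at $(j,0)$ (to get an outermost peripheral arc, resp.\ an extremal bridging arc, at $(j,0)$), the absence of special upper marked points (to exclude extra vertices on the upper boundary of the region), and maximality of $T$ (to force the closing side of the would-be triangle to actually lie in $T$) --- and your write-up never invokes the ``no special marked points'' hypothesis anywhere.

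Filling this gap requires essentially the argument the paper gives directly: one identifies the extremal compatible arc at $(j,0)$ (the bridging arc with minimal upper endpoint, together with the outermost left peripheral arc at $(j,0)$, or the boundary segment to $(j-1,0)$ if there is none), checks case by case that the candidate bridging arc to the left crosses no arc of $T$, and concludes by maximality that it belongs to $T$. In other words, once you prove that $\Delta_L$ exists you have already done the work of the lemma, so as written the proposal assumes the crux rather than proving it. The conclusion you are after is true and your outer iteration is a legitimate alternative packaging, but the key step needs the compatibility-plus-maximality analysis spelled out.
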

\begin{proof}
(a). Since $T$ is admissible, $(j,0)$ is not a left-fountain.  Therefore, there exists a minimum integer $u$ such that there exists a bridging arc from $(u,1)$ to $(j,0)$ in $T$.  Clearly, $u\leq v$. Now, consider the set $P_j$ of all integers for which there exists a peripheral arc from $(m,0)$ to $(j,0)$ in $T$ and $m<j$.  Since $(j,0)$ is not a left-fountain, this set is either empty or as a minimum element, say $j_0$.  Set $i=j-1$ if $P_j=\varnothing$ and $i=j_0$ if $P_j\neq\varnothing$.  By construction, $i<j$ and the bridging arc from $(u,1)$ to $(i,0)$ does not cross any arc in $T$.  Consequently, this bridging arc is in $T$, and this shows the statement.

(b). The proof is dual.
\end{proof}

\begin{lemma}\label{lem:passing over}
Let $T$ be a triangulation of the infinite strip $\mathbb{V}(M_1, M_2)$, for some subset $M_2$ of $\mathbb{Z}$. Assume moreover that there is no special marked points on the upper boundary.  Then $T$ is admissible if and only if, for all $m,n\in\mathbb{Z}$ with $m<n$, there exists a peripheral arc passing over $(m,0)$ and $(n,0)$, or there exist integers $p$ and $q$ such that $p\leq m<n\leq q$, and bridging arcs from $(i,1)$ to $(p,0)$ and from $(j,1)$ to $(q,0)$ for some integers $i,j$ with $i\leq j$.
\end{lemma}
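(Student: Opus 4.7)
The plan is to prove the two directions of the biconditional separately. For $(\Leftarrow)$ I will argue by contrapositive, and for $(\Rightarrow)$ I will iteratively apply Lemma~\ref{lem:bridging arcs}.

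For the contrapositive of $(\Leftarrow)$, suppose $T$ is not admissible; by the symmetry between the two cases, I may assume that some $(k,0)$ is a left-fountain. I set $n = k+1$ and pick any $m < k$, and show that neither option of the condition can hold. A hypothetical peripheral arc $\alpha$ from $(l,0)$ to $(l',0)$ with $l \leq m < n \leq l'$ must have $l < k < l'$, so $(k,0)$ lies strictly inside the topological disk bounded by $\alpha$ and the lower boundary segment from $(l,0)$ to $(l',0)$. The left-fountain property supplies an arc $(k,0)-(k',0) \in T$ with $k' < l$; this arc has one endpoint inside and one outside the disk, so it must cross $\alpha$, contradicting both being in $T$. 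The same strategy rules out the bridging option: from hypothetical bridging arcs $(i,1)-(p,0)$ and $(j,1)-(q,0)$ with $p \leq m < n \leq q$, one has $p < k$, and a sufficiently long arc $(k,0)-(k',0) \in T$ with $k' < p$ encloses $(p,0)$ on the lower boundary, so the bridging arc starting at $(p,0)$ must cross it to reach the upper boundary.

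For $(\Rightarrow)$, assume $T$ is admissible with no special marked points on the upper boundary, and fix $m < n$. I will produce bridging arcs witnessing option (II). Iterating Lemma~\ref{lem:bridging arcs}(a) on any bridging arc of $T$ yields bridging arcs whose lower endpoint is arbitrarily far to the left, and iterating (b) produces the right-hand analogue; so I can choose $\gamma_L : (i,1)-(p,0)$ in $T$ with $p \leq m$ and $\gamma_R : (j,1)-(q,0)$ in $T$ with $q \geq n$. Since $p \leq m < n \leq q$ forces $p < q$, and since $\gamma_L$ and $\gamma_R$ belong to $T$ and therefore do not cross, we cannot have $i > j$: otherwise $\gamma_L$ would run from the lower-left to the upper-right and $\gamma_R$ from the lower-right to the upper-left, forcing a crossing in the interior of the strip. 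Hence $i \leq j$ and option (II) holds.

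The main obstacle will be making the topological crossing claims rigorous, since arcs are defined up to isotopy and so ``crossing'' must be a property of isotopy classes rather than of particular representatives. The workhorse is a standard separation principle: a peripheral arc from $(l,0)$ to $(l',0)$, together with the lower boundary segment between those endpoints, bounds a topological disk in $\mathbb{V}(M_1,M_2)$, so any arc having one endpoint strictly inside and another strictly outside this disk must cross the peripheral arc (in every representative). Once this observation is stated cleanly, both the fountain-based contradictions in $(\Leftarrow)$ and the ``opposite diagonals force a crossing'' argument in $(\Rightarrow)$ become essentially formal.
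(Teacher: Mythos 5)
Your forward direction has a substantive gap: you announce that you will always produce bridging arcs witnessing the second alternative, but this is impossible when $T$ contains no bridging arc at all --- in particular when $M_2=\varnothing$, which satisfies the hypotheses and is precisely the case in which the lemma is later invoked (Proposition~\ref{prop:enough ones}). In that situation the conclusion must be the \emph{first} alternative, i.e.\ a peripheral arc passing over $(m,0)$ and $(n,0)$, and extracting such an arc from admissibility is genuine work: the paper's proof treats it by citing \cite[Lemma 3.7]{GG14} for the base case $n-m=1$ and then inducting on $n-m$, enlarging a covering peripheral arc (using that $(m+1,0)$ is not a right-fountain and maximality of its right endpoint) or else falling back on a bridging arc incident to $(m+1,0)$. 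Your proposal contains no argument for this case, and even when $M_2\neq\varnothing$ you take ``any bridging arc of $T$'' without justifying that one exists (it does, because the absence of special marked points on the upper boundary excludes peripheral arcs there, so every upper marked point meets a bridging arc --- but this needs saying). The remaining pieces of your forward direction are fine: iterating Lemma~\ref{lem:bridging arcs} does push the lower endpoints past $m$ and $n$, and compatibility does force $i\leq j$.

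There is also a smaller hole in your backward direction. A left-fountain at $(k,0)$, in the sense of this paper, is a marked point incident with infinitely many arcs to the left, and these may all be \emph{bridging} arcs to upper marked points; nothing guarantees the peripheral arcs $(k,0)$--$(k',0)$ with $k'$ arbitrarily small that your argument uses. Your disk-separation workhorse then still rules out a peripheral arc passing over $(m,0)$ and $(n,0)$ (all but finitely many fountain arcs, peripheral or bridging, must leave the disk it bounds), but to rule out the bridging alternative you need the other crossing criterion: two bridging arcs whose endpoints interleave (lower endpoints $p<k$, upper endpoints $u<i$) must cross, applied to $\beta_L\colon(i,1)$--$(p,0)$ and a fountain arc $(k,0)$--$(u,1)$ with $u$ far to the left. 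So the backward direction is correct in spirit and easily patched, whereas the missing $M_2=\varnothing$ (no bridging arc) case in the forward direction is a real omission that your approach, as stated, cannot cover.
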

\begin{proof}
If the triangulation $T$ is not admissible, then $T$ contains at least one marked point $(k,0)$ incident with infinitely many arcs. Without loss of generality, suppose that $(k,0)$ is a left-fountain. Then, taking $m=k-1$ and $n=k+1$, it is easily understood that there is no peripheral arc passing over $(m,0)$ and $(n,0)$, nor there exist an integer $p$ with $p\leq m$ together with a bridging arc from $(u,1)$ to $(p,0)$ for some $u\in\mathbb{Z}$. This shows the sufficiency.  

For the necessity, let $m,n\in\mathbb{Z}$, with $m<n$. We prove the statement by induction on $d=m-n$.  Suppose that $d=1$. If $M_2=\varnothing$, the statement follows directly from \cite[Lemma 3.7]{GG14} (in which an admissible triangulation of $\mathbb{V}(M_1, \varnothing)$ is called locally finite). Suppose that $M_2\neq\varnothing$ and that there is no arc in $T$ passing over $(m,0)$ and $(n,0)$. Since $M_2\neq \varnothing$, and $T$ contains no special marked points on the upper boundary, there is at least one bridging arc in the triangulation.  Applying Lemma~\ref{lem:bridging arcs} repeatedly yields the result.


Suppose now, inductively, that $d>1$. Consider the marked points $(m+1,0)$ and $(n,0)$.  By induction hypothesis, there exists a peripheral arc in $T$, say $\alpha$, passing over $(m+1,0)$ and $(n,0)$, or there exist integers $p$ and $q$ such that $p\leq m+1<n\leq q$, and bridging arcs, say $\sigma$ and $\phi$, from $(u,1)$ to $(p,0)$ and from $(v,1)$ to $(q,0)$ respectively, for some integers $u,v$ with $u\leq v$. 

In the first case, if $\alpha$ passes over $(m,0)$ and $(n,0)$, the proof is over.  Else, $\alpha$ connects $(m+1,0)$ to $(n_0,0)$, for some $n_0\geq n$.  Since $(m+1,0)$ is not a right-fountain, we may furthermore assume that $n_0$ is maximal for this property. Consider the arc from $(m,0)$ to $(n_0,0)$.  If this arc is in $T$, the proof is over.  Otherwise, by the maximality of $n_0$, this implies that there is a bridging arc in $T$ ending at $(m+1,0)$. Applying Lemma~\ref{lem:bridging arcs} repeatedly yields the result.



In the second case, there exist integers $p$ and $q$ such that $p\leq m+1<n\leq q$, and bridging arcs, say $\sigma$ and $\phi$, from $(u,1)$ to $(p,0)$ and from $(v,1)$ to $(q,0)$ respectively, for some integers $u,v$ with $u\leq v$. Again, applying Lemma~\ref{lem:bridging arcs} repeatedly yields the result.

%
\end{proof}

\subsection{Constructions of Baur, Parsons and Tschabold revisited}\label{section:BPT}

In \cite[Theorem 5.6]{BPT15}, the authors show that every subset $M_2$ of $\mathbb{Z}$ and every admissible triangulation (with, possibly, a generic and asymptotic arcs) of the infinite strip $\mathbb{V}(M_1, M_2)$ gives rise to an infinite frieze.  The construction is simple: if $T$ is a triangulation of the infinite strip $\mathbb{V}(M_1, M_2)$ for some subset $M_2$ of $\mathbb{Z}$, let $a_i$ be the number of triangles incident with $(i,0)$ for every $i\in\mathbb{Z}$.  Then $(a_i)_{i\in\mathbb{Z}}$ is the quiddity sequence of an infinite frieze.  We will denote this infinite frieze by $\Phi(T)$. This construction, which mimics the idea of Corollary~\ref{cor:CC_quiddity}, still applies to our context. 

The construction of Baur, Parsons and Tschabold, see \cite[Theorem 5.2]{BPT15}, associating to each infinite frieze $t$ a triangulation of the strip $\mathbb{V}(M_1, M_2)$, for some subset $M_2$ of $\mathbb{Z}$, is more complicated. Below, we rephrase this construction via an algorithm. Note however that the following algorithm has been slightly adapted so that no generic or asymptotic arcs are required, by opposition to what is implicitly proposed in \cite{BPT15}. Note that we do not explain here why the algorithm gives rise to triangulations of the strip; and refer to \cite{BPT15} for the explanations. 

Note however that Step (A) or Step (B) in the algorithm could never terminate, in which case the set of all arcs added via these steps yield a triangulation of the infinite strip, as described in Step (C).  In all cases, the resulting triangulation is denoted by $\Psi(t)$. 

\begin{algorithm}\label{algo}
\emph{
Let $t:\mathbb{Z}\times\mathbb{Z}\rightarrow \mathbb{Z}$ be an infinite frieze, with quiddity sequence $(a_i)_{i\in\mathbb{Z}}$. Set, temporarily, $k:=0$ and $a^{(k)}_i=a_i$ for all $i\in\mathbb{Z}$. 
\begin{enumerate}
	\item[(A)] Set $Z_k:=\{i \in\mathbb{Z} \ | \ a^{(k)}_i=1\}$. 
	\begin{itemize}
		\item If $Z_k= \varnothing$, then go to (B).
		\item Else, for every $i\in \mathbb{Z}$, let $i^{-}_k=\max\{j\in \mathbb{Z} \ | \ a_j^{(k)}\neq 0 \text{ and } j<i\}$ and $i^{+}_k=\min\{j\in \mathbb{Z} \ |\ a_j^{(k)}\neq 0 \text{ and } j>i\}$. Moreover, for every $i\in Z_k$,
				\begin{enumerate}
						\item[(1)]  Add a peripheral arc in $\Psi(t)$ from $(i^-_k,0)$ to $(i^+_k,0)$.  
						\item[(2)]
							Replace the sequence $(a^{(k)}_i)_{i\in\mathbb{Z}}$ by the sequence $(a^{(k+1)}_i)_{i\in\mathbb{Z}}$, where 
							\[
							a^{(k+1)}_i=\left\{\begin{array}{ll}
							0 & \text{if } i\in Z_k,\\
							a^{(k)}_i-2 & \text{if } i^-_k\in Z_k \text { and } i^+_k\in Z_k,\\
							a^{(k)}_i-1 & \text{if, either } i^-_k\in Z_k \text{ or } i^+_k\in Z_k, \\
							a^{(k)}_i & \text{if } i^-_k\notin Z_k \text { and } i^+_k\notin Z_k.\end{array}\right.  
							\]
						\item[(3)] Increment $k$ by one, that is, set $k:=k+1$, and repeat (A).
				\end{enumerate}
	\end{itemize}
	\item[(B)] By construction, for every $i\in\mathbb{Z}$, we have $a^{(k)}_i=0$ or $a^{(k)}_i\geq 2$. Moreover, for every 			$i\in\mathbb{Z}$ such that $a^{(k)}_i\geq 2$, there is no arc in $\Psi(t)$ passing over $(i,0)$ and $a^{(k)}_i-1$ arcs incident to $(i,0)$ are missing in the triangulation $\Psi(t)$ under construction. 
	For each $i\in\mathbb{Z}$, set
	\[
	d_i=\left\{\begin{array}{ll}
	0 & \text{if } a^{(k)}_i=0,\\
	a^{(k)}_i-2 & \text{if } a^{(k)}_i\geq 2.
	\end{array}\right.  
	\]
	Let $N:=1+\displaystyle\sum_{i\in\mathbb{Z}}d_i$.  Note that $N$ could be finite or infinite.  We add the above-mentioned missing arcs as follows. Note however that the following steps will terminate if and only if $N$ is finite (see Step (C) for the details).
	\begin{itemize}
		\item If $N=1$, then, for every $i\in\mathbb{Z}$, we have $a^{(k)}_i=0$ or $a^{(k)}_i= 2$.  In this case, add one marked 				point on the upper boundary, say $(1,1)$, and add one bridging arc in $\Psi(t)$ from $(1,1)$ to $(i,0)$, for every 									$i\in\mathbb{Z}$ such that $a^{(k)}_i= 2$. Then go to Step (C).
		\item Else, $N>1$. Let $i_0\in\mathbb{Z}$ be such that $a^{(k)}_{i_0}> 2$. Add $a^{(k)}_{i_0}-1$ consecutive marked points 				on the upper boundary and add bridging arcs in $\Psi(t)$ from these marked points to $(i_0, 0)$. Set, 						temporarily, $l:=0$. Then do (1) and (2).
			\begin{enumerate}
				\item[(1)] Let $i_{l+1}=\min\{i\in\mathbb{Z} \ | \ i> i_l \text{ and } a^{(k)}_{i}> 2\}$.  
				\begin{itemize}
				\item If no such $i_{l+1}$ exists, add a bridging arc in $\Psi(t)$ from the rightmost marked point on the upper boundary to all $(i,0)$ such that 									$i>i_l$ and $a^{(k)}_i= 2$; then set $l:=0$ and go to (2).  
				\item If such an $i_{l+1}$ exists, add one bridging arc in $\Psi(t)$ from the rightmost marked point on the upper boundary to $(i_{l+1}, 0)$. Then add $a^{(k)}_{i_{l+1}}-2$, consecutive marked points on the upper boundary, to the right of the already existing marked points, and add bridging arcs in $\Psi(t)$ from those marked points to $(i_{l+1},0)$.  Increment $l$ by one, that is, set $l:=l+1$, and repeat (1).
				\end{itemize}
				\item[(2)] Let $i_{l-1}=\max\{i\in\mathbb{Z} \ | \ i< i_l \text{ and } a^{(k)}_{i}> 2\}$.  
				\begin{itemize}
				\item If no such $i_{l-1}$ exist, add a bridging arc in $\Psi(t)$ from the leftmost marked point on the upper boundary to all $(i,0)$ such that $i<i_l$ 					and $a^{(k)}_i= 2$; then go to Step (C).  
				\item If such an $i_{l-1}$ exists, add one bridging arc in $\Psi(t)$ from the leftmost marked point on the upper boundary to $(i_{l-1}, 0)$. Then add $a^{(k)}_{i_{l-1}}-2$, consecutive marked 							points on the upper boundary, to the left of the already existing marked points, and add bridging arcs in $\Psi(t)$ from those marked 							points to $(i_{l-1},0)$.  Decrement $l$ by one, that is, set $l:=l-1$, and repeat (2).
				\end{itemize}
			\end{enumerate}
	\end{itemize}
	\item[(C)] The set of peripheral arcs, added in Step (A), together with the bridging arcs, added in Step (B), form an admissible 							triangulation of the infinite strip $\mathbb{V}(M_1, M_2)$, denoted by $\Psi(t)$, where $M_2$ is in bijection with
					\[
					\left\{\begin{array}{ll} \varnothing & \text{if (A) did not terminate}, \\
					\{1,2,\dots,N\} & \text{if (A) and (B) terminated, and $N<\infty$ in (B)},\\
					\mathbb{Z} & \text{if (A) terminated but (B)(1) and (B)(2) did not},\\
					-\mathbb{N}=\{\dots, -2, -1, 0\} & \text{if (A) and (B)(1) terminated but (B)(2) did not},\\
					\mathbb{N}=\{0,1,2,\dots\} & \text{if (A) and (B)(2) terminated but (B)(1) did not}.
					\end{array}\right.
					\] 					
					Moreover, for every $i\in\mathbb{Z}$, there are $a_i$ triangles incident to the vertex $(i,0)$. 
	\end{enumerate}
}
\end{algorithm}

\begin{remark}\label{rem:cond}
The algorithm provides admissible triangulations of the infinite strip having:
\begin{itemize}
\item No generic or asymptotic arcs, in the sense of \cite{BPT15},
\item No special marked point on the upper boundary.  In particular, by \cite[(Lemma 1)]{BCI74}, the triangulations contain no peripheral arcs on the upper boundary.
\end{itemize}
\end{remark}

\begin{example}\label{ex:triangulation}
Consider the quiddity sequence \[(a_i)_{i\in\mathbb{Z}}=(\dots, 3,3,4,2,1,6,2,2,\dots)\]  of an infinite frieze $t$. Without loss of generality, assume that \[\dots, a_{-5}^{(0)}=3, a_{-4}^{(0)}=3, a_{-3}^{(0)}=4, a_{-2}^{(0)}=2, a_{-1}^{(0)}=1, a_0^{(0)}=6, a_1^{(0)}=2, a_2^{(0)}=2, \dots.\]  
Initially, $Z_0:=\{-1\}$, since $a_{-1}^{(0)}=1$. Thus, applying Step (A), we draw a peripheral arc from $(-2,0)$ to $(0,0)$. Moreover, 
\[\dots, a_{-5}^{(1)}=3, a_{-4}^{(1)}=3, a_{-3}^{(1)}=4, a_{-2}^{(1)}=1, a_{-1}^{(1)}=0, a_0^{(1)}=5, a_1^{(1)}=2, a_2^{(1)}=2, \dots.\] 
We get:
\begin{center}
\includegraphics{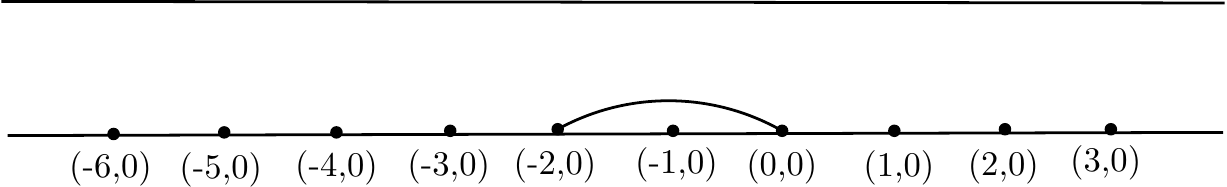}
\end{center}
Now $Z_1:=\{-2\}$, since $a_{-2}^{(1)}=1$. Thus, applying Step (A) again, we draw a peripheral arc from $(-3,0)$ to $(0,0)$. Moreover,   
\[\dots, a_{-5}^{(2)}=3, a_{-4}^{(2)}=3, a_{-3}^{(2)}=3, a_{-2}^{(2)}=0, a_{-1}^{(2)}=0, a_0^{(2)}=4, a_1^{(2)}=2, a_2^{(2)}=2, \dots.\] 
Thus:
\begin{center}
\includegraphics{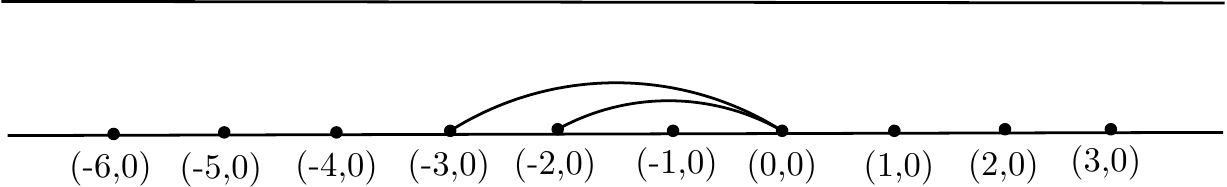}
\end{center}
At this point, $Z_2=\varnothing$, and thus Step (A) is over. A simple calculation shows that $N=\infty$. Let, say, $i_0=0$.  Since $a_0^{(2)}=4$, we draw $a_0^{(2)}-1=3$ marked points on the upper boundary, and we add bridging arcs from these marked points to $(0,0)$.
\begin{center}
\includegraphics{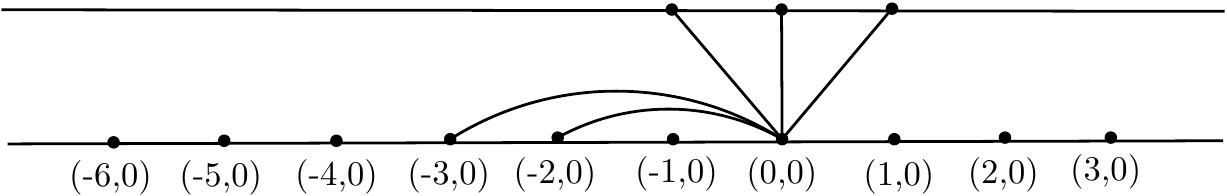}
\end{center}
We now apply Step (B)(1). Since $a_i^{(2)}=2$ for all $i>0$, $i_1$ does not exist. Consequently, we add a bridging arc from the rightmost marked point on the upper boundary to all $(i,0)$, with $i>0$.
\begin{center}
\includegraphics{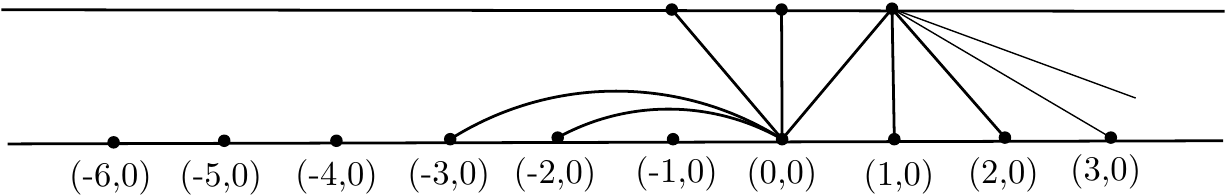}
\end{center}
Finally, for Step (B)(2), we have $i_{-1}=-3$. Thus we add a bridging arc from the leftmost marked point on the upper boundary to $(-3,0)$. Moreover, since $a_{-3}^{(2)}=3$, we add $a_{-3}^{(2)}-2=1$ marked point on the upper boundary and add a bridging arc from this marked point to $(-3,0)$.  Repeating Step (B)(2) indefinitely yields the final admissible triangulation $\Psi(t)$ given by
\begin{center}
\includegraphics{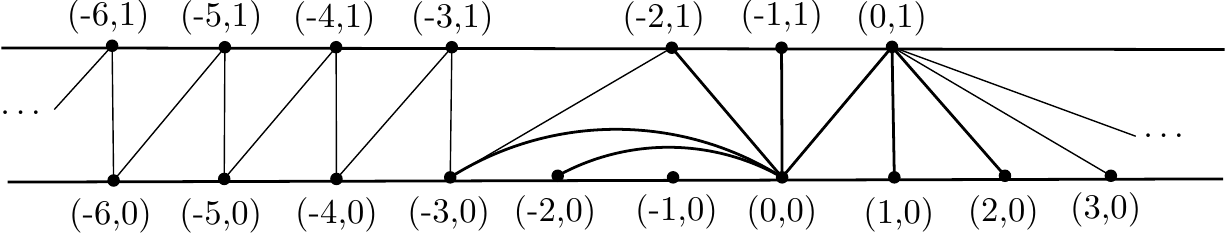}
\end{center}
where the marked points on the upper boundary were labeled in order to correspond with the conclusion in Step (C) concerning $M_2$.
\end{example}

\subsection{Dehn twist}

Let $M_2=\mathbb{Z}$, and suppose that $T$ is a triangulation of the infinite strip $\mathbb{V}(M_1, M_2)$ with no special marked point on the upper boundary. 
Following the terminology of \cite{V15}, we say that the \textbf{positive Dehn twist} of $T$ is the triangulation $D(T)$ whose arcs are obtained from those of $T$ by replacing every bridging arc from $(u,1)$ to $(j,0)$ in $T$ by a bridging arc from $(u+1, 1)$ to $(j, 0)$, but keeping the peripheral arc of $T$ on the lower boundary.
Similarly, \textbf{negative Dehn twist} of $T$ is the triangulation $D^{-1}(T)$ whose arcs are obtained from those of $T$ by replacing every bridging arc from $(u,1)$ to $(j,0)$ in $T$ by a bridging arc from $(u-1, 1)$ to $(j, 0)$, but keeping the peripheral arc of $T$ on the lower boundary.

Finally, let $D^n(T)$ be the triangulation obtained from $T$ upon applying $n$ times the positive Dehn twist, if $n\geq 0$, or $-n$ times the negative Dehn twist if $n<0$.

\begin{definition}
Two triangulations $T$ and $T'$ of $\mathbb{V}(M_1, \mathbb{Z})$ are called \textbf{Dehn twist equivalent} if $T'$ can be obtained from $T$ upon a sequence of positive or negative Dehn twists, that is, $T'=D^{n}(T)$ for some $n\in\mathbb{Z}$.
\end{definition}


\begin{example}
If $T$ is the triangulation of the infinite strip given by
\begin{center}
\includegraphics{triangulation.pdf}
\end{center}
then its positive Dehn twist $D(T)$ is given by
\begin{center}
\includegraphics{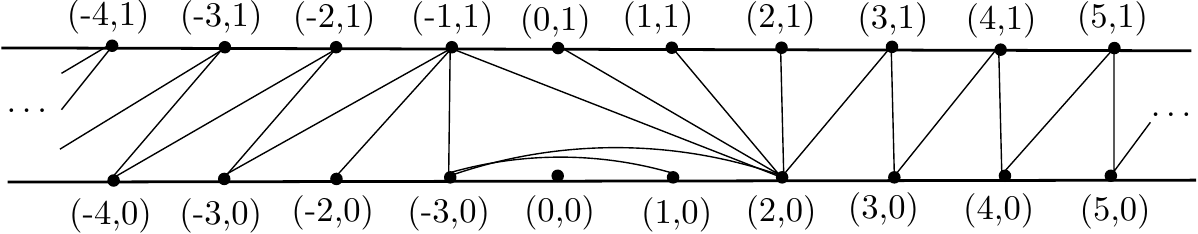}
\end{center}
\end{example}

Observe that if two triangulations $T$ and $T'$ of the infinite strip are Dehn equivalent, then, for every $i\in\mathbb{Z}$, the number of triangles incident to the marked point $(i,0)$ is the same for both triangulations.  Consequently, $\Phi(T)=\Phi(T')$.
\subsection{CC-counting and BCI-counting methods}

Let $T$ be an admissible triangulation of the infinite strip, for some subset $M_2$ of $\mathbb{Z}$. The purpose of this section is to show that the CC-counting and BCI-counting methods presented in Section~\ref{section:CC} still applies to the infinite frieze $\Phi(T)$, that is, every entry in the infinite frieze $\Phi(T)$ can be obtained via the CC-counting or the BCI-counting method applied on the marked points of the infinite strip.  Observe that the fact that $T$ is admissible makes it possible to compute $CC_T(i,j)$ and $BCI_T(i,j)$ for any two marked points on the lower boundary of the infinite strip, with $i\leq j$.

\begin{proposition}\label{prop:CC counting}
Let $T$ be an admissible triangulation of the infinite strip $\mathbb{V}(M_1, M_2)$, for some subset $M_2$ of $\mathbb{Z}$, with no special marked point on its upper boundary.  Let $a_i$ denote the number of triangles adjacent to the marked point $(i,0)$ for each $i\in\mathbb{Z}$.  The infinite frieze $t=\Phi(T)$ having $(a_i)_{i\in\mathbb{Z}}$ has quiddity sequence satisfies $t(i,j)=CC_T(i,j)=BCI_T(i, i+1, \dots,j)$ for all $i,j\in\mathbb{Z}$ with $i\leq j$.
\end{proposition}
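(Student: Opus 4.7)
My plan is to proceed by induction on $d = j - i \ge 0$, treating all three quantities simultaneously. The base cases $d = 0$ and $d = 1$ are immediate from the respective conventions and from Definition~\ref{def:infinite frieze}. For $d = 2$, note that $t(i, i+2) = a_{i+1}$ by definition of the quiddity sequence, while the CC-rule applied at vertex $(i+1, 0)$ and the BCI-count both yield $a_{i+1}$, since $(i+1, 0)$ is incident to exactly $a_{i+1}$ triangles.

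For the inductive step with $d \ge 3$, the strategy is to show that both $CC_T$ and $BCI_T$ satisfy the same three-term recurrence as $t$. By Corollary~\ref{cor:recurrence}(a),
\[ t(i, j) = a_{j-1}\, t(i, j-1) - t(i, j-2), \]
so it suffices to establish the analogous recurrences for $CC_T(i, j)$ and for $BCI_T(i, i+1, \dots, j)$. I plan to obtain these via reduction to the polygon case: admissibility of $T$ implies that only finitely many triangles of $T$ are incident to the marked points $(k, 0)$ with $i \le k \le j$, so these triangles can be enclosed in a finite triangulated polygon $(P_{i,j}, \tilde T)$ in which each interior vertex $(k, 0)$ with $i < k < j$ keeps its original quiddity $a_k$ and the lower-boundary walk $i, i+1, \dots, j$ is preserved. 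Proposition~\ref{prop:BCI} applied to $\tilde T$ then yields
\[ CC_{\tilde T}(i, j) = BCI_{\tilde T}(i, i+1, \dots, j) = f_{\tilde T}(i, j), \]
where $f_{\tilde T}$ is the associated Conway-Coxeter frieze pattern. Because CC and BCI are local computations that only involve the triangles of $\tilde T$, these values coincide with $CC_T(i, j)$ and $BCI_T(i, i+1, \dots, j)$; the classical frieze recurrence satisfied by $f_{\tilde T}$ then gives the desired recurrences, and the induction closes.

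The main technical obstacle will be making the polygon extraction precise. By Lemma~\ref{lem:passing over}, the configuration near the interval between $(i,0)$ and $(j,0)$ falls into one of two cases: either some peripheral arc passes over $(i, 0)$ and $(j, 0)$, or bridging arcs flank the interval. In either case, $P_{i,j}$ must be defined so as to enclose every triangle incident to an interior vertex $(k, 0)$ with $i < k < j$ without creating spurious new ones; admissibility guarantees that this is possible, but care is needed when closing off the region along the upper boundary so that the quiddity values $a_{i+1}, \dots, a_{j-1}$ are transferred intact to $\tilde T$.
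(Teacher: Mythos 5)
Your proposal is correct and follows essentially the same route as the paper: both arguments cut out a finite triangulated polygon using admissibility, the absence of special marked points on the upper boundary and Lemma~\ref{lem:passing over}, and then invoke Proposition~\ref{prop:BCI} together with the classical Conway--Coxeter theory via Corollary~\ref{cor:recurrence}, your induction on $j-i$ being merely a repackaging of the continuant determinant in Corollary~\ref{cor:recurrence}(b). The technical point you defer is settled in the paper simply by applying Lemma~\ref{lem:passing over} to $(i-1,0)$ and $(j+1,0)$ rather than to $(i,0)$ and $(j,0)$, so that cutting along the resulting peripheral or bridging arcs removes no triangle incident to any $(k,0)$ with $i\leq k\leq j$.
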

\begin{proof}
Let $i,j\in\mathbb{Z}$.  If $i=j$, then $t(i,j)=0=CC_T(i,j)$ by convention.  Similarly, if $j=i+1$, then $t(i,j)=1=CC_T(i,j)$.  Suppose that $j\geq i+2$.  By Lemma~\ref{lem:passing over}, there exists a peripheral arc passing over $(i-1,0)$ and $(j+1,0)$, or there exist integers $p$ and $q$ such that $p\leq i-1<j+1 q$, and bridging arcs from $(u,1)$ to $(p,0)$ and from $(v,1)$ to $(q,0)$ for some integers $u,v$ with $u\leq v$.  Since $T$ is admissible with no special marked points on its upper boundary, cutting along the peripheral arc or the two bridging arcs yields, in either case, a triangulation of a polygon $P$ having finitely many vertices.  Moreover, since this cutting does not cut off any of the triangles incident to $(k,0)$, with $k\in\{i, i+1, \dots, j\}$, there are still $a_k$ triangles adjacent to the marked point $(k,0)$ in $P$, for all $k\in\{i, i+1, \dots, j\}$. Consequently, we have $CC_T(i,j)=CC_P(i,j)$ and $BCI_T(i,i+1, \dots, j)=BCI_P(i,i+1, \dots, j)$, and thus $CC_T(i,j)=BCI_T(i,i+1, \dots, j)$ by Proposition~\ref{prop:BCI}.  Finally, it follows from \cite{CC73-1,CC73-2} and Corollary~\ref{cor:recurrence} that \[
t(i,j)=\left|\begin{array}{cccccc}
a_{i+1} & 1 & 0 &\cdots & 0 & 0\\
1 & a_{i+2} & 1  & \cdots & 0 & 0\\
0 & 1 & a_{i+3} & \cdots & 0 & 0 \\
\vdots & \vdots & \vdots& \ddots& \vdots& \vdots\\
0 & 0 & 0 & \cdots & a_{j-2} & 1\\
0 & 0 & 0 & \cdots & 1 & a_{j-1}
\end{array}\right|=CC_P(i,j).
\]
\end{proof}

The following useful corollary follows immediately from the preceding proposition and the CC-counting method.

\begin{corollary}\label{cor:peripheral}
Let $T$ be an admissible triangulation of the infinite strip $\mathbb{V}(M_1, M_2)$, for some subset $M_2$ of $\mathbb{Z}$, with no special marked point on its upper boundary.  Let $t=\Phi(T)$.  For all $i,j\in\mathbb{Z}$, with $i\leq j$, we have $t(i,j)=1$ if and only if $j=i+1$ or there is a peripheral arc in $T$ from $(i,0)$ to $(j,0)$.
\end{corollary}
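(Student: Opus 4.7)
The plan is to combine Proposition~\ref{prop:CC counting}, which gives $t(i,j) = CC_T(i,j)$, with the classical Conway--Coxeter observation that, in any triangulation of a finite polygon $P$, one has $CC_P(A,B) = 1$ if and only if $A$ and $B$ are endpoints of a common side or arc of $P$. This observation is immediate from the CC-labelling rule: every vertex receives a positive-integer label, and every non-initial label is the sum of two earlier positive labels, hence is at least $2$; so the only vertices that can ever receive the label $1$ are the neighbours of $A$ at the initial step.

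The backward implication is then straightforward. If $j=i+1$, then $t(i,j)=1$ by Definition~\ref{def:infinite frieze}(b). If $(i,0)$ and $(j,0)$ are joined by a peripheral arc of $T$, I would invoke the cutting procedure used in the proof of Proposition~\ref{prop:CC counting} to produce a finite polygon $P$ in which that peripheral arc appears as an arc of the induced triangulation; the observation above then gives $CC_P(i,j)=1$, hence $t(i,j)=1$.

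For the forward implication, I would assume $t(i,j)=1$ with $j\geq i+2$ and apply Lemma~\ref{lem:passing over} with $m=i-1$ and $n=j+1$ to produce the finite polygon $P$ used in Proposition~\ref{prop:CC counting}, so that $CC_P(i,j)=1$. By the observation above, $(i,0)$ and $(j,0)$ must be joined by a side or an arc of $P$. A short case analysis on the two configurations of Lemma~\ref{lem:passing over} then finishes the proof: whether $P$ is bounded by a single outer peripheral arc passing over $(i-1,0)$ and $(j+1,0)$, or by two bridging arcs with lower endpoints $p\leq i-1$ and $q\geq j+1$ together with a portion of the upper boundary, the inequalities place both $(i,0)$ and $(j,0)$ strictly in the interior of the lower boundary of $P$. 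Hence they cannot be endpoints of the cutting sides or of any upper-boundary segment, and since $j\neq i+1$ they cannot share a lower-boundary segment either. The adjacency must therefore come from an arc of $T$ lying inside $P$ with both endpoints on the lower boundary, i.e.\ a peripheral arc from $(i,0)$ to $(j,0)$.

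The only real obstacle is the bookkeeping involved in the case analysis, namely verifying that in each configuration produced by Lemma~\ref{lem:passing over} the two points $(i,0)$ and $(j,0)$ are genuinely strictly interior to the lower-boundary portion of $P$. Once this is done, the classical Conway--Coxeter fact recalled at the outset closes the argument without further computation.
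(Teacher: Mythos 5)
Your proposal is correct and follows essentially the same route as the paper, whose entire proof is the remark that the corollary ``follows immediately from the preceding proposition and the CC-counting method'': you simply flesh out that one-liner by combining $t(i,j)=CC_T(i,j)=CC_P(i,j)$ from Proposition~\ref{prop:CC counting} with the classical Conway--Coxeter fact that a CC-label equals $1$ exactly at vertices adjacent to the base vertex via a side or an arc. The cutting argument and the check that $(i,0)$, $(j,0)$ lie strictly inside the lower boundary of the cut polygon are exactly the details the paper leaves implicit, and they go through as you describe.
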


\section{Main results}

In this section, we show our main results:
\begin{theorem}\label{thm:main}
Algorithm~\ref{algo} provides a bijection between the infinite friezes and the admissible triangulations of the infinite strip  $\mathbb{V}(M_1, M_2)$ having no special marked points on the upper boundary, up to Dehn twist equivalence when $M_2=\mathbb{Z}$.
\end{theorem}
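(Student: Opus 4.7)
The plan is to establish Theorem~\ref{thm:main} by proving that Algorithm~\ref{algo}, which realizes the map $\Psi$, is the two-sided inverse (up to Dehn twist equivalence when $M_2=\mathbb{Z}$) of the map $\Phi$ that sends a triangulation to the infinite frieze determined by its quiddity sequence. My first task would be to verify that $\Psi(t)$ is indeed an admissible triangulation of some $\mathbb{V}(M_1,M_2)$ with no special marked point on the upper boundary: this requires tracking, in each of the five scenarios in Step (C) of the algorithm, that all marked points on the lower boundary receive only finitely many arcs and that every marked point introduced on the upper boundary is used, which follows directly from the update rules in Step (A) and the balanced right/left extension mechanism in Steps (B)(1)--(B)(2).

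For the composition $\Phi\circ\Psi=\mathrm{id}$, I would simply invoke the conclusion stated at the end of Step (C): the algorithm is designed so that exactly $a_i$ triangles are incident to each marked point $(i,0)$ in $\Psi(t)$. Hence the quiddity sequence of $\Phi(\Psi(t))$ equals $(a_i)_{i\in\mathbb{Z}}$, and since an infinite frieze is determined by its quiddity sequence through Corollary~\ref{cor:recurrence}(b) combined with antisymmetry, this forces $\Phi(\Psi(t))=t$.

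The substantive direction is $\Psi\circ\Phi$. Given an admissible triangulation $T$ of $\mathbb{V}(M_1,M_2)$ with no special marked point on the upper boundary and $t=\Phi(T)$, I would reconstruct $T$ from $t$ in two stages mirroring the two blocks of the algorithm. For the peripheral part, Corollary~\ref{cor:peripheral} asserts that peripheral arcs of $T$ are in bijection with strict ``1''-entries of $t$ above the main diagonal; in particular $a_i^{(0)}=t(i-1,i+1)=1$ detects precisely the ``ears'' of $T$, and cutting each such ear replaces $a_i$ by $0$ and decreases $a_{i\pm1}$ by one, which is exactly the substitution rule producing $(a_i^{(k+1)})_{i\in\mathbb{Z}}$. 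An induction on the nesting depth of peripheral arcs then shows that Step (A) recovers the entire peripheral structure of $T$. For the bridging part (i.e.\ once Step (A) terminates), I would use Lemma~\ref{lem:bridging arcs} and Lemma~\ref{lem:passing over} to argue that at every $(i,0)$ with $a_i^{(k)}\geq 2$ the triangulation $T$ has exactly $a_i^{(k)}-1$ bridging arcs, that these bridging arcs are linearly ordered along the upper boundary, and that their ``interlacing pattern'' is determined entirely by the sequence $(a_i^{(k)})_{i\in\mathbb{Z}}$---this is the same combinatorial datum that Step (B) reads off to reconstruct the upper boundary.

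The main obstacle, I expect, is handling the rigidity of the bridging configuration in the bi-infinite case $M_2=\mathbb{Z}$: one must show both that Step (B)(1) and Step (B)(2) never ``skip'' or ``double-label'' an upper vertex of $T$, and that the only indeterminacy in the reconstruction is the choice of origin on the upper boundary---a choice that corresponds precisely to an integer shift of all bridging arcs, namely a Dehn twist. In the remaining four cases of Step (C) (a finite $M_2$, $M_2=\mathbb{N}$, $M_2=-\mathbb{N}$, or $M_2=\varnothing$), an endpoint, semi-boundary, or the absence of bridging arcs fixes the labeling, so no Dehn twist ambiguity arises and $\Psi(\Phi(T))=T$ on the nose. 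Combining these ingredients delivers the claimed bijection.
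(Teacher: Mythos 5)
Your proposal is correct and follows essentially the same route as the paper: the identity $\Phi\circ\Psi=\mathrm{id}$ is read off from Step (C) of Algorithm~\ref{algo}, the peripheral arcs are pinned down via Corollary~\ref{cor:peripheral}, and the bridging configuration is shown to be rigid (consecutive upper endpoints forced because no special upper marked points means no upper peripheral arcs), with the only freedom being the choice of origin on the upper boundary when $M_2=\mathbb{Z}$, i.e.\ a Dehn twist. The paper merely packages the hard direction as injectivity of $\Phi$, split into Lemmas~\ref{lem:N not M}, \ref{lem:uniqueness} and \ref{lem:dehn} comparing two triangulations with the same frieze, rather than as your direct reconstruction of $T$ from $\Phi(T)$ along the steps of the algorithm, but the underlying arguments coincide.
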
 

\subsection{Proof of Theorem \ref{thm:main}}

Observe first that if $t$ is an infinite frieze with quiddity sequence $(a_i)_{i\in\mathbb{Z}}$, then the admissible triangulation of the strip $\Psi(t)$ provided by Algorithm~\ref{algo} has $a_i$ triangles incident to vertex $(i,0)$.  Thus, because an infinite frieze is uniquely determined by its quiddity sequence, we get $\Phi(\Psi(t))=t$. Consequently, the construction $\Phi$, when restricted to the admissible triangulations with no special marked points on their upper boundaries, remains surjective. 

The objective of the following lemmata is to show the injectivity of $\Phi$, up to Dehn twist equivalence in the case where $M_2=\mathbb{Z}$, and thus that $\Phi$ and $\Psi$ are inverse constructions.

\begin{lemma}\label{lem:N not M}
Let $T$ and $T'$ be admissible triangulations with no special marked points on their upper boundaries of the infinite strip $\mathbb{V}(M_1, M_2)$ and $\mathbb{V}(M_1, M_2')$ respectively, for some subsets $M_2$ and $M_2'$ of $\mathbb{Z}$. If $\Phi(T)=\Phi(T')$, then there is a bijection between $M_2$ and $M_2'$.  
\end{lemma}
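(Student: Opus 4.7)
The plan is to show that $|M_2|$ is entirely determined by the frieze $t = \Phi(T) = \Phi(T')$; since $M_2, M_2' \subseteq \mathbb{Z}$, equality of cardinalities already yields a bijection. First, by Corollary~\ref{cor:peripheral} applied to both $T$ and $T'$, the sets of lower peripheral arcs coincide, as both are described by the pairs $(i,j)$ with $j > i+1$ and $t(i,j) = 1$. Writing $P(i)$ for the number of lower peripheral arcs at $(i,0)$, this forces the number of bridging arcs at $(i,0)$ to be the same in either triangulation: $B(i) = a_i - 1 - P(i)$.

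The critical step, which I expect to be the main obstacle, is to show that $T$ (and hence $T'$) can contain no upper peripheral arc. Suppose for contradiction that $\alpha$ is an upper peripheral arc of $T$ from $(u_1, 1)$ to $(u_2, 1)$. Since $\alpha$ is non-contractible, the disc it bounds together with the upper boundary segment must enclose at least one upper marked point; call them $v_1 < \cdots < v_k$ with $k \geq 1$. The restriction of $T$ to this disc triangulates the $(k+2)$-gon $\Pi$ with cyclic vertices $u_1, v_1, \ldots, v_k, u_2$. By the Two Ears Theorem, any triangulation of $\Pi$ has at least two vertices incident to no diagonal of $\Pi$; for $u_1$ and $u_2$ to be these two vertices, the diagonals $v_1 u_2$ and $v_k u_1$ would both have to be present, but their endpoints interleave in the cyclic order on $\Pi$, forcing the diagonals to cross---a contradiction. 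Hence some $v_j$ is an ear of $\Pi$, and since both edges of $\Pi$ incident to $v_j$ are upper boundary segments of the strip, the point $(v_j, 1)$ has no incident arc of $T$ at all and is therefore special, contradicting the hypothesis. This is essentially Lemma~1 of \cite{BCI74}.

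With upper peripheral arcs ruled out, every upper marked point of $T$ must be incident to at least one bridging arc (otherwise it would be special). I now define an equivalence relation on the set of bridging arcs of $T$: two bridging arcs are related whenever they are the two bridging edges of a common $LLU$-triangle, and the relation is then closed transitively. A short check shows that two bridging arcs are equivalent precisely when they share the same upper endpoint (using that consecutive bridging arcs at a given upper vertex bound an $LLU$-triangle), so the equivalence classes are in bijection with $M_2$. Crucially, the relation is describable from the frieze alone: at each lower vertex $(i, 0)$ with $B(i) \geq 1$, order the bridging arcs angularly as $b_{i,1}, \ldots, b_{i, B(i)}$; then $b_{i, B(i)} \sim b_{i', 1}$, where $(i', 0)$ is the next lower vertex with $B(i') \geq 1$, joined to $(i,0)$ either by adjacency in $M_1$ or by a lower peripheral arc (both determined by $t$). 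The same description applies verbatim to $T'$, yielding the same number of classes; therefore $|M_2| = |M_2'|$, which gives the required bijection.
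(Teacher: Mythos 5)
Your strategy is genuinely different from the paper's: instead of the paper's case analysis on $M_2$ (empty, finite, $\cong\mathbb{N}$, $\cong-\mathbb{N}$, $\cong\mathbb{Z}$) with fountains and vertex-counting in cut-off polygons, you try to exhibit $|M_2|$ directly as a frieze invariant, namely as the number of classes of bridging arcs under the $LLU$-triangle relation. The preliminary steps are fine: equality of lower peripheral arcs and of the counts $B(i)$ follows from Corollary~\ref{cor:peripheral}, and your two-ears elimination of upper peripheral arcs is correct (it is exactly Lemma~1 of Broline--Crowe--Isaacs, which the paper also invokes). The problem is that the sentence ``the relation is describable from the frieze alone'' is asserted rather than proved, and it is precisely there that the hypothesis of no special upper marked points has to do its real work; as written, this is the gap.

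Concretely, two statements are needed and missing. First, that $b_{i,B(i)}$ and $b_{i',1}$ really share their upper endpoint when $i<i'$ are consecutive bridging-active lower vertices: if their upper endpoints were $u<u'$, then any upper marked point strictly between $u$ and $u'$ could carry no arc at all (no upper peripherals, and a bridging arc from it would either end strictly between $i$ and $i'$ or cross $iu$ or $i'u'$), hence would be special; and if $u,u'$ were adjacent in $M_2$, the triangle on the boundary segment between them would have a lower vertex contradicting the extremality of $u$ at $i$ or of $u'$ at $i'$. This also presupposes that a next active vertex $i'$ exists, which requires Lemma~\ref{lem:bridging arcs} (active lower vertices are unbounded in both directions once a bridging arc exists). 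Second, and conversely, you must show that your listed relations generate the \emph{whole} relation, i.e.\ that distinct generated classes are not merged by coincidence of upper endpoints: one has to check that the upper endpoint of a ``middle'' arc $b_{i,m}$, $1<m<B(i)$, is joined to no other lower vertex (a crossing argument), and that the class of an upper vertex joined to several lower vertices is exactly the chain consisting of the last arc at its leftmost neighbour, the sole arcs in between, and the first arc at its rightmost neighbour. Without these two verifications the conclusion ``same number of classes, hence $|M_2|=|M_2'|$'' does not follow; with them your argument does go through (equality of cardinality suffices for the stated bijection, though note the paper's proof additionally matches the order types of $M_2$ and $M_2'$, which is what is actually used later in the proof of Theorem~\ref{thm:main}).
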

\begin{proof}
Let $t=\Phi(T)=\Phi(T')$ and suppose that $(a_i)_{i\in\mathbb{Z}}$ is the quiddity sequence of $t$.  By definition of $\Phi$, for each $i\in\mathbb{Z}$, there are $a_i$ triangles incident to $(i,0)$ in $T$ and $T'$, and thus $a_i-1$ arcs incident to $(i,0)$. Moreover, by Corollary~\ref{cor:peripheral}, $T$ and $T'$ have the same peripheral arcs (on their lower boundaries). So only the bridging arcs of $T$ and $T'$ could differ, although the quantity of bridging arcs is the same at each marked point $(i,0)$.  
There are four possibilities:
\begin{enumerate}
\item[(1)] If $M_2=\varnothing$, then there is no bridging arc in $T$, and thus no bridging arc in $T'$.  So $|M_2|=0=|M_2'|$.  
\item[(2)] Suppose that $0<|M_2|<\infty$. In this case, set $N=|M_2|$, and label the marked points on the upper boundary by $(1,1), (2,1), \dots, (N,1)$, where $(1,1)$ is the leftmost marked point and $(N,1)$ is the rightmost marked point. Since $T$ is admissible with no special marked points on the upper boundary, it follows from Lemma~\ref{lem:bridging arcs} that $(1,1)$ is a left-fountain.  Dually, $(N,1)$ is a right-fountain. 

Now, let $i,j$ be any two integers such that $(i,0)$ and $(j,0)$ are connected to $(1,1)$ via bridging arcs, and $i<j$.  We will show that, in $T'$, the marked points $(i,0)$ and $(j,0)$ are also connected to the same marked point on the upper boundary.  Indeed, suppose that, in $T'$, $(i,0)$ is connected to $(u_1,1)$ and $(j,0)$ is connected to $(v,1)$, with $u_1\leq v$.  Now, the triangulation $T$ induces a triangulation of the polygon $P$ with vertices $\{(i,0), (i+1, 0), \dots (j,0), (1,1)\}$.  Similarly $T'$ induces a triangulation of the polygon $P'$ with vertices $\{(i,0), (i+1, 0), \dots (j,0), (v,1), (v-1, 1), \dots, (u_1,1)\}$. Observe that $P'$ has more vertices than $P$ if $u_1<v$, and thus its triangulation requires more arcs than the one of $P$. However, since the peripheral arcs on the lower boundary are the same in $P$ and $P'$, and that there is no special marked points on the upper boundaries (thus no peripheral arcs on the upper boundaries), there must be more bridging arcs in $P'$ than in $P$, contradicting the fact that the number of bridging arcs incident to each marked point on the lower boundaries is the same in $T$ and $T'$. Thus $u_1=v$. Since this is true for any two marked points on the lower boundary that is connected to $(1,1)$ in $T$, this shows that $(u_1,1)$ is a left-fountain in $T'$.  Consequently, $(u_1,1)$ is the leftmost marked point on the upper boundary of $T'$.  Similarly, the fact the $(N,1)$ is a right-fountain in $T$ implies the existence of a right-fountain on the upper boundary of $T'$, say $(u_{N'},1)$.  Moreover,  $(u_{N'},1)$ is the rightmost marked point on the upper boundary of $T'$.  Denote by $(u_1,1), (u_2,1), \dots, (u_{N'},1)$ the marked points on the upper boundary of $T'$, so that $|M_2'|=N'$.

Since the same argument can be used to show that any two marked points on the lower boundary that are connected to $(u_1,1)$ in $T'$ are connected to the same left-fountain in $T$, we get that the marked points on the lower boundary that are connected to $(1,1)$ in $T$ coincide with those that are connected to $(u_1, 1)$ in $T'$.  Similarly, the marked points on the lower boundary that are connected to $(N,1)$ in $T$ coincide with those that are connected to $(u_{N'}, 1)$ in $T'$. Let $(m_0,0)$ be the rightmost marked point on the lower boundary that is connected to $(1,1)$ in $T$ (and $(u_1,1)$ in $T'$).  Similarly, let $(n_0,0)$ be the leftmost marked point on the lower boundary that is connected to $(N,1)$ in $T$ (and $(u_{N'},1)$ in $T'$).  Then $T$ induces a triangulation on the polygon $Q$ with vertices $\{(1,1), \dots, (N,1), (n_0,0), (n_0-1, 0), \dots (m_0,0)\}$.  Similarly, $T'$ induces a triangulation of the polygon $Q'$ with vertices $\{(u_1,1), \dots, (u_{N'},1),(n_0,0), (n_0-1, 0), \dots (m_0,0)\}$.  By the same argument as for the triangulated polygons $P$ and $P'$ above, one must have $N=N'$.  So $|M_2|=|M_2'|$.
\item[(3)] Suppose that $M_2$ is in bijection with $\mathbb{N}$. Observe that $M_2'$ is infinite, since otherwise the cases (1) and (2) applied to $M_2'$ would imply that $M_2$ is finite, a contradiction.  Moreover, since $M_2$ is in bijection with $\mathbb{N}$, then $T$ has a leftmost marked point on the upper boundary.  Just as in case (2), this implies that $T'$ also has a leftmost marked point on its upper boundary. Therefore, $T'$ is in bijection with $\mathbb{N}$.  Dually, if $M_2$ is in bijection with $-\mathbb{N}$, then so does $M_2'$.
\item[(4)] Suppose that $M_2$ is in bijection with $\mathbb{Z}$.  By cases (1) to (3) applied to $M_2'$, $M_2'$ is infinite and $T'$ has no leftmost nor rightmost marked points on its upper boundary.  Consequently,  $M_2'$ is in bijection with $\mathbb{Z}$.
\end{enumerate}
\end{proof}

\begin{lemma}\label{lem:uniqueness}
Let $T$ and $T'$ be admissible triangulations with no special marked points on their upper boundaries of the infinite strip $\mathbb{V}(M_1, M_2)$, for some subset $M_2$ and $M_2'$ of $\mathbb{Z}$. If $M_2$ is a proper subset of $\mathbb{Z}$, then $T=T'$.
\end{lemma}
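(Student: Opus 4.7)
The approach is to combine Lemma \ref{lem:N not M} with Corollary \ref{cor:peripheral} to force the two triangulations to agree arc by arc. I read the statement under the implicit hypothesis $\Phi(T)=\Phi(T')$, which is the injectivity content that this lemma is serving in the proof of Theorem \ref{thm:main}. By Lemma \ref{lem:N not M}, the sets $M_2$ and $M_2'$ are in bijection, so since $M_2$ is a proper subset of $\mathbb{Z}$, $M_2'$ is of the same ``type'' (empty, finite nonempty, order-isomorphic to $\mathbb{N}$, or to $-\mathbb{N}$). Corollary \ref{cor:peripheral} then ensures that $T$ and $T'$ carry the same peripheral arcs on the lower boundary (these being characterised by the $1$-entries of the shared frieze with $j>i+1$), while the no-special-marked-points hypothesis, as recorded in Remark \ref{rem:cond}, rules out peripheral arcs on the upper boundary. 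Consequently only bridging arcs can differ, and since the quiddity sequence forces $a_i-1$ arcs at $(i,0)$, the number of bridging arcs incident to each lower marked point also matches between $T$ and $T'$.

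If $M_2=\emptyset$ the proof is complete. Otherwise assume, without loss of generality (the other case being symmetric), that $M_2$ admits a leftmost element. Enumerate $M_2=\{v_1<v_2<\cdots\}$ and $M_2'=\{v_1'<v_2'<\cdots\}$. The argument already worked out in case (2) of Lemma \ref{lem:N not M} shows that $v_1$ and $v_1'$ are both left-fountains and, under the identification $v_1\leftrightarrow v_1'$, they are joined by bridging arcs to exactly the same set of lower marked points. Since a bridging arc is determined up to isotopy by its two endpoints, this matches the entire $v_1$-fan of $T$ with the $v_1'$-fan of $T'$.

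I would then iterate: having matched the fans at $v_1,\dots,v_{k-1}$ with those at $v_1',\dots,v_{k-1}'$, cutting the strip along these now common arcs decomposes both triangulations into matching outer pieces together with a residual region in which $v_k$ (respectively $v_k'$) is the new leftmost upper marked point. This residual region is either an infinite strip or, when $M_2$ is finite and $k=|M_2|$, a polygon; in either case it inherits matching peripheral arcs on its lower boundary, no special upper marked points, and a well defined induced frieze, so the base-case argument reapplies and matches the $v_k$-fans. The induction delivers $T=T'$. The main obstacle will be the rigorous bookkeeping of this cutting step: one must check that the residual region really is an admissible strip or polygon satisfying the lemma's hypotheses, and separately handle the boundary cases where the leftmost fan extends to $-\infty$ on the lower boundary (so that the outer left piece is itself an infinite half-strip triangulated only by the peripheral arcs already matched via Corollary \ref{cor:peripheral}) and where $M_2\cong -\mathbb{N}$, which requires the dual argument starting from a rightmost element. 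A possible shortcut would be to compare $T$ directly with the output $\Psi(\Phi(T))$ of Algorithm \ref{algo} and argue that, once the leftmost upper marked point is fixed, the algorithm's choices are forced; but either route depends on the same extremal-point analysis as the key input.
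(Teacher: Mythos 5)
Your proposal is correct and follows essentially the same route as the paper: reduce to matching bridging arcs via Corollary~\ref{cor:peripheral} and the fan-matching argument from case (2) of Lemma~\ref{lem:N not M}, then induct along the upper marked points starting from the extremal (leftmost or, dually, rightmost) one. The paper carries out your "cut and reapply" step without any cutting bookkeeping, by noting that the rightmost lower point of the previously matched fan forces the bridging arc to the next upper marked point and then rerunning the polygon-counting argument inside the original strip, but this is the same extremal induction you describe.
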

\begin{proof}
As in the proof of the preceding lemma, let $t=\Phi(T)=\Phi(T')$ and suppose that $(a_i)_{i\in\mathbb{Z}}$ is the quiddity sequence of $t$.  By definition of $\Phi$, for each $i\in\mathbb{Z}$, there are $a_i$ triangles incident to $(i,0)$ in $T$ and $T'$, and thus $a_i-1$ arcs incident to $(i,0)$. Moreover, by Corollary~\ref{cor:peripheral}, $T$ and $T'$ have the same peripheral arcs (on their lower boundaries). So only the bridging arcs of $T$ and $T'$ could differ, although the quantity of bridging arcs is the same at each marked point $(i,0)$.  
\begin{enumerate}
\item[(1)] If $M_2=\varnothing$, then there is no bridging arc in $T$, and thus no bridging arc in $T'$.  So $T=T'$.  
\item[(2)] Suppose that $0<|M_2|<\infty$. Let $N=|M_2|$, and let $(1,1), (2,1), \dots, (N,1)$ be the marked points on the upper boundary. By the proof of case (2) in Lemma~\ref{lem:N not M}, $(1,1)$ and $(N'1)$ are left and right-fountains, respectively, for both $T$ and $T'$.  Moreover, the marked points on the lower boundary that are connected to these fountains via bridging arcs are the same in $T$ and $T'$.  If $N=1$, there is then nothing to show. Inductively, suppose that $N>1$.  For simplicity, suppose, without loss of generality, that the rightmost marked point on the lower boundary that is connected to $(1,1)$ (in both $T$ and $T'$) is $(1,0)$. 
Under this assumption, the bridging arc from $(1,0)$ to $(2,1)$ does not cross any arcs in $T$ and $T'$, and therefore is in $T$ and $T'$. This reduces the situation to a case with $N-1$ marked points on the upper boundary, namely $(2,1), (3,1), \dots, (N,1)$.  By induction, $T=T'$.
\item[(3)] Suppose that $M_2$ is in bijection with $\mathbb{N}$, and let $(1,1), (2,1), \dots$ denote the marked points on the upper boundary, where $(1,1)$ is the leftmost one, in both $T$ and $T'$. Moreover, as in (2), the marked points on the lower boundary that are connected to $(1,1)$ via bridging arcs are the same in $T$ and $T'$. Now suppose, inductively, that $n\geq 2$ and that the marked points on the lower boundary that are connected to $(n-1, 1)$ are the same in $T$ and $T'$.  Since there is no special marked points on the upper boundaries in $T$ and $T'$, there is a bridging arc incident to $(n,1)$.  Therefore, there is a rightmost marked point on the lower boundary connected to $(n-1,1)$, and this marked point is the same in $T$ and $T'$, say $(1,0)$ for simplicity. As in the proof of case (2) in Lemma~\ref{lem:N not M}, we show that the marked points on the lower boundary that are connected to $(n,1)$ are the same in $T$ and $T'$.  First, since $(1,0)$ is the rightmost marked point on the lower boundary connected to $(n-1,1)$ in $T$ and $T'$, there must be a bridging arc from $(1,0)$ to $(n,1)$ in both $T$ and $T'$. Then, suppose that $i$ is any integer such that $(i,0)$ is connected to $(n,1)$ via a bridging arc in $T$.  Observe that $i\geq 1$.  Suppose that, in $T'$, $(i,0)$ is connected to $(u,1)$, with $u\geq n$.  Now, the triangulation $T$ induces a triangulation of the polygon $P$ with vertices $\{(1,0), (2, 0), \dots (i,0), (n,1)\}$.  Similarly $T'$ induces a triangulation of the polygon $P'$ with vertices $\{(1,0), (2, 0), \dots (i,0), (u,1), (u-1, 1), \dots, (n,1)\}$. Observe that $P'$ has more vertices than $P$ if $n<u$, and thus its triangulation requires more arcs than the one of $P$. However, since the peripheral arcs on the lower boundary are the same in $P$ and $P'$, and that there is no special marked points on the upper boundaries (thus no peripheral arcs on the upper boundaries), there must be more bridging arcs in $P'$ than in $P$, contradicting the fact that the number of bridging arcs incident to each marked point on the lower boundaries is the same in $T$ and $T'$. Thus $u=n$. Since one could start with the bridging arcs in $T'$ instead of those in $T$, this shows that the marked points on the lower boundary that are connected to $(n,1)$ are the same in $T$ and $T'$.  By induction, $T=T'$.

Clearly, if $M_2$ is in bijection with $-\mathbb{N}$, the argument is dual.
\end{enumerate}
\end{proof}

\begin{lemma}\label{lem:dehn}
Let $T$ and $T'$ be admissible triangulations with no special marked points on their upper boundaries of the infinite strip $\mathbb{V}(M_1, M_2)$, with $M_2=\mathbb{Z}$. Then $T$ and $T'$ are Dehn twist equivalent.
\end{lemma}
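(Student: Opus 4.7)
Following the pattern of Lemma~\ref{lem:uniqueness}, I would first assume (as is tacitly required) that $\Phi(T) = \Phi(T')$, since otherwise the statement is vacuous. Let $(a_i)_{i \in \mathbb{Z}}$ be the common quiddity sequence. By Corollary~\ref{cor:peripheral}, the peripheral arcs on the lower boundary of $T$ and $T'$ coincide, and since each $(i,0)$ is incident to $a_i$ triangles in both triangulations, the number of bridging arcs at $(i,0)$ is also the same in $T$ and $T'$. The task thus reduces to aligning the upper endpoints of the bridging arcs.

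Next, I would use a Dehn twist to produce a shared bridging arc. Since $M_2 = \mathbb{Z}$ is bi-infinite and no upper vertex is special, there must be at least one bridging arc in each triangulation. I would fix a lower vertex $(j_0, 0)$ incident to such an arc in $T$, and let $(u, 1)$ (respectively $(u', 1)$) be the leftmost upper vertex joined to $(j_0, 0)$ in $T$ (respectively $T'$). Setting $n = u - u'$, I replace $T'$ by $D^n(T')$, which preserves all peripheral arcs and all bridging-arc counts at lower vertices, while arranging that the leftmost bridging arc at $(j_0,0)$ in $D^n(T')$ coincides with the corresponding arc of $T$. The problem is thereby reduced to showing $T = D^n(T')$.

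Finally, I would induct on $N \geq 0$ to prove that the bridging arcs at every $(i, 0)$ with $|i - j_0| \leq N$ agree in $T$ and $D^n(T')$. The inductive step is essentially the polygon argument of Lemma~\ref{lem:uniqueness}, case (3), applied simultaneously to the left and to the right: using Lemma~\ref{lem:passing over}, one can cut off a finite polygon bounded by already-agreed bridging arcs and by peripheral lower arcs common to both triangulations; a discrepancy in the upper endpoint of the next bridging arc would force strictly more vertices in the corresponding polygon for $D^n(T')$, and hence strictly more arcs in its triangulation, contradicting the equality of bridging-arc counts at each $(i,0)$.

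The principal obstacle I anticipate is the bi-infinite character of the induction: unlike in Lemma~\ref{lem:uniqueness}, there is no left- or right-fountain on the upper boundary to serve as a natural anchor, so the Dehn twist must be used to create one, and agreement must then be propagated outward in \emph{both} directions from $(j_0,0)$. A secondary concern is ensuring that no peripheral arc on the upper boundary interferes with the polygon argument, but this should be controllable using admissibility together with Lemma~\ref{lem:passing over}, exactly as in the proof of Lemma~\ref{lem:uniqueness}.
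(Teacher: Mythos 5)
Your proposal is correct and takes essentially the same route as the paper's proof: reduce to the bridging arcs (the peripheral arcs and the bridging-arc counts being fixed by the common frieze, whose equality $\Phi(T)=\Phi(T')$ you rightly make explicit as a tacit hypothesis), normalize by a Dehn twist so that the arcs at one base lower vertex coincide, and propagate agreement outward in both directions using that the upper endpoints of the bridging arcs at each lower vertex are consecutive since there are no special upper marked points. The paper justifies the propagation by a direct ``forced arc'' determination (the next bridging vertex must connect to the rightmost upper endpoint of the previous one) rather than your polygon-counting contradiction borrowed from Lemma~\ref{lem:uniqueness}, but this is a difference of justification, not of strategy.
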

\begin{proof}
Let $t$ be an infinite frieze with quiddity sequence $(a_i)_{i\in\mathbb{Z}}$.  Suppose that $t=\Phi(T)$, for some admissible triangulation of the infinite strip $\mathbb{V}(M_1, M_2)$, with $M_2=\mathbb{Z}$, with no special marked point on the upper boundary. By definition of $\Phi$, for each $i\in\mathbb{Z}$, there are $a_i$ triangles incident to $(i,0)$ in $T$, and thus $a_i-1$ arcs incident to $(i,0)$. Moreover, by Corollary~\ref{cor:peripheral}, the peripheral arcs on the lower boundary of $T$ are completely determined by $t$, so that the quantity of bridging arcs ending at each marked point is also determined by $t$.

Denote by $M$ the set of marked points on the lower boundary that are incident to at least one bridging arcs in $T$. Observe that because $|M_2|=\infty$ and $T$ is admissible with no special marked points on its upper boundary, the set $M$ is also infinite. Moreover, it follows from Lemma~\ref{lem:bridging arcs} that $M$ is in bijection with $\mathbb{Z}$.  So let $M=\{(m_i,0) \ | \ i\in\mathbb{Z}\}$. Suppose furthermore that $(m_i,0)$ is incident to $b_i$ bridging arcs. 

Consider $m_0\in M$.  Then $(m_0, 0)$ is incident to $b_0$ bridging arcs in $T$.  Observe that since $T$ has no special marked point on its upper boundary, and thus no peripheral arc on its upper boundary, then the upper endpoints of these bridging arcs are all consecutive, say $(0,1), (1,1), \dots (b_0,1)$, where $(0,1)$ is the leftmost one. We claim that this choice of establishing $(0,1)$ has the leftmost marked point on the upper boundary that is connected to $(m_0,0)$ completely determines $T$.  Indeed, consider the marked point $(m_1,0)$.  For $T$ to be a triangulation, the bridging arc from $(m_1,0)$ to $(b_0,1)$ must be in $T$. Moreover, as for $(m_0, 0)$, $(m_1, 0)$ must be connected to $b_1$ consecutive marked points on the upper boundary, namely $(b_0, 1), (b_0+1, 1), \dots (b_0+b_1-1, 1)$.  Continuing in this way with $(m_2, 0), (m_3, 0), \dots$, and similarly with $(m_{-1},0), (m_{-2}, 0), \dots$ show that $T$ is completely determined by the choice of $(1,0)$ as the leftmost marked point connected to $(m_0,0)$. 

Clearly, a different choice for the leftmost marked point connected to $(m_0,0)$ would yield a different triangulation $T'$, in which the upper endpoints of all bridging arcs are shifted to the right or to the left compared to those in $T$.  In other words, $T$ and $T'$ would be Dehn twist equivalent.
\end{proof}

Combining the above three lemmata, we get the proof of the theorem.  Indeed, let $T$ and $T'$ be admissible triangulations with no special marked points on their upper boundaries of the infinite strips $\mathbb{V}(M_1, M_2)$ and $\mathbb{V}(M_1, M_2')$ respectively, for some subsets $M_2$ and $M_2'$ of $\mathbb{Z}$.  If $\Phi(T)=\Phi(T')$, then there is a bijection between $M_2$ and $M_2'$ by Lemma~\ref{lem:N not M}. Consequently, up to labeling of the marked points on the upper boundaries, we have $M_2=M_2'$.  If $M_2\neq \mathbb{Z}$, it follows from Lemma~\ref{lem:uniqueness} that $T=T'$. Else, $M_2=\mathbb{Z}$, and it follows from Lemma~\ref{lem:dehn} that $T$ and $T'$ are Dehn twist equivalent. Consequently, $\Phi$ is injective, up to Dehn twist equivalence when $M_2=\mathbb{Z}$. This completes the proof.

\subsection{Triangulations of $\mathbb{V}(M_1, \varnothing)$.}

The triangulations of the $\infty$-gon were first studied by Holm and Jorgensen in \cite{HJ12} in connection with the study of a cluster structure on a certain category of infinite Dynkin type.  This category, and the triangulations of the $\infty$-gon, were then further investigated by Grabowski and Gratz \cite{GG14}. In this setting, the {\bf $\infty$-gon} is interpreted as the discrete line, that is the line of integers, and a triangulation of the $\infty$-gon is a maximal collection of peripheral arcs, in the sense described previously.  Consequently, a triangulation of the $\infty$-gon is equivalent to a triangulation of the infinite strip $\mathbb{V}(M_1, \varnothing)$ with no special marked points on its upper boundary.

\begin{example}\label{ex:infty-gon}
A possible admissible triangulation of the $\infty$-gon is given by
\begin{center}
\includegraphics{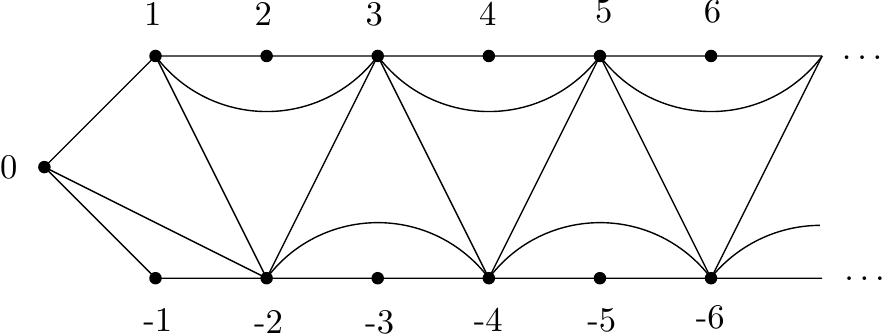}
\end{center}
where we \lq\lq bended\rq\rq\, the discrete line in order to make the pattern of the triangulation more evident for further purposes.
\end{example}

In \cite{HJ13}, Holm and J{\o}rgensen showed that the admissible triangulations of the infinite strip $\mathbb{V}(\mathbb{Z}, \mathbb{Z})$ (with no special marked point on the upper boundary) are in bijection with the $SL_2$-tilings of the discrete plane with enough ones.  In this section, we adapt the terminology of \cite{HJ13} to our context to show a similar result in connection with the admissible triangulation of the $\infty$-gon, equivalently, of the infinite strip $\mathbb{V}(M_1, \varnothing)$ with no special marked points on the upper boundary.

\begin{definition}
An infinite frieze $t:\mathbb{Z}\times\mathbb{Z}\rightarrow\mathbb{Z}$ is said to \textbf{have enough ones} if, for all $(i,j)\in\mathbb{Z}\times\mathbb{Z}$ with $i\leq j$, there exists $(i', j')\in\mathbb{Z}\times\mathbb{Z}$ such that $i'\leq i\leq j\leq j'$ and $t(i',j')=1$. 
\end{definition}

\begin{proposition}\label{prop:enough ones}
The admissible triangulations of the infinite strip $\mathbb{V}(M_1, \varnothing)$ with no special marked point on the upper boundary are in bijection with the infinite friezes with enough ones.
\end{proposition}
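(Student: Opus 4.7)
The plan is to invoke Theorem~\ref{thm:main}, which provides a bijection (via $\Phi$ and $\Psi$) between infinite friezes and admissible triangulations of $\mathbb{V}(M_1,M_2)$ having no special marked points on the upper boundary, up to Dehn twist equivalence when $M_2=\mathbb{Z}$. I would then show that, under this bijection, the triangulations with $M_2=\varnothing$ correspond precisely to the infinite friezes with enough ones. Since $M_2=\varnothing$ sidesteps the Dehn twist ambiguity, this refinement of the main theorem will deliver Proposition~\ref{prop:enough ones}.

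For the forward direction, suppose $T$ is an admissible triangulation of $\mathbb{V}(M_1,\varnothing)$ with no special marked points on the upper boundary, and set $t=\Phi(T)$. Given $(i,j)\in\mathbb{Z}\times\mathbb{Z}$ with $i\leq j$, I would distinguish two cases. If $j\leq i+1$, then $(i',j')=(i,i+1)$ already satisfies $i'\leq i\leq j\leq j'$ and $t(i',j')=1$ by definition. If $j\geq i+2$, I would apply Lemma~\ref{lem:passing over} with $m=i$ and $n=j$; since $M_2=\varnothing$ there are no bridging arcs, so the lemma must produce a peripheral arc of $T$ from some $(i',0)$ to $(j',0)$ passing over $(i,0)$ and $(j,0)$, giving $i'\leq i<j\leq j'$. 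By Corollary~\ref{cor:peripheral}, this yields $t(i',j')=1$, confirming the enough-ones condition.

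For the reverse direction, let $t$ be an infinite frieze with enough ones and suppose, for contradiction, that $\Psi(t)$ contains a bridging arc from some $(u,1)$ to $(j_0,0)$. Applying the enough-ones condition to the pair $(i,j)=(j_0-1,j_0+1)$ produces $(i',j')$ with $i'\leq j_0-1<j_0<j_0+1\leq j'$ and $t(i',j')=1$; in particular $j'-i'\geq 2$, so by Corollary~\ref{cor:peripheral} there is a peripheral arc $\alpha$ in $\Psi(t)$ from $(i',0)$ to $(j',0)$. Since $i'<j_0<j'$, the arc $\alpha$ together with the lower-boundary segments from $(i',0)$ to $(j',0)$ bounds a region of the strip whose boundary contains $(j_0,0)$ but not $(u,1)$; hence the bridging arc from $(u,1)$ to $(j_0,0)$ must cross $\alpha$, contradicting the compatibility of arcs of a triangulation. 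Consequently $\Psi(t)$ has no bridging arc, and so $M_2=\varnothing$.

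Combined with Theorem~\ref{thm:main}, the two directions yield the proposition. The main obstacle I anticipate is the topological separation argument forcing a bridging arc incident to $(j_0,0)$ to cross any peripheral arc from $(i',0)$ to $(j',0)$ with $i'<j_0<j'$; the choice $(i,j)=(j_0-1,j_0+1)$ is what guarantees the strict inequality $j'-i'\geq 2$, which in turn is what Corollary~\ref{cor:peripheral} needs in order to deliver a genuine peripheral arc (rather than a boundary segment) to obstruct the bridging arc.
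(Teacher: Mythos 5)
Your proposal is correct and follows essentially the same route as the paper: the forward direction uses Lemma~\ref{lem:passing over} together with Corollary~\ref{cor:peripheral}, the reverse direction uses Corollary~\ref{cor:peripheral} to rule out bridging arcs (forcing $M_2=\varnothing$), and the bijection is then obtained from Theorem~\ref{thm:main}. You merely make explicit two small points the paper leaves implicit, namely the crossing/separation argument showing a bridging arc at $(j_0,0)$ would intersect the peripheral arc, and the choice of $(j_0-1,j_0+1)$ guaranteeing $j'-i'\geq 2$ so that Corollary~\ref{cor:peripheral} yields a genuine arc rather than a boundary segment.
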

\begin{proof}
Let $T$ be an admissible triangulation of the infinite strip $\mathbb{V}(M_1, \varnothing)$ with no special marked point on the upper boundary.  Since there are no bridging arcs, it follows from Lemma~\ref{lem:passing over} that for all $m, n\in\mathbb{Z}$
there exists a peripheral arc passing over $(m, 0)$ and $(n, 0)$. Now consider the infinite frieze $\Phi(T)$ introduced in Section~\ref{section:BPT}.  It follows from Corollary~\ref{cor:peripheral} that $\Phi(T)$ has enough ones.

Conversely, suppose that $t$ is an infinite frieze with enough ones, and suppose that $t=\Phi(T)$, where $T$ is an admissible triangulations of the infinite strip $\mathbb{V}(M_1, M_2)$ with no special marked point on the upper boundary.  By Corollary~\ref{cor:peripheral} again, there is a peripheral arc passing over any two marked points $(m,0)$ and $(n,0)$.  Consequently, there cannot be bridging arcs in $T$.  Since $T$ has no special marked point on the upper boundary, we get $M_2=\varnothing$.  The bijection then follows from Theorem~\ref{thm:main}. 
\end{proof}

\begin{example}
The following infinite frieze has enough ones since it is the infinite frieze corresponding to the triangulation of the $\infty$-gon given in Example~\ref{ex:infty-gon}.  Observe that $\dots, a_{-2}=1, a_{-1}=3, a_0=2, a_1=1, a_2=5, a_3=1, \dots$.
\end{example}
{\em
\begin{center}
  \begin{tikzpicture}[auto]
    \matrix
    {
			&&\node{{\em (-5)}};&\node{{\em (-4)}};&\node{{\em (-3)}};&\node{{\em (-2)}};&\node{{\em (-1)}};& \node{{\em (0)}}; &\node{{\em (1)}};&\node{{\em (2)}};&\node{{\em (3)}};&\node{{\em (4)}};&\node{{\em (5)}};& \\
      &&&&&&& \node{$\vdots$}; &&&&&& \\
      \node{{\em (-5)}}; && \node {0}; & \node {1}; & \node {5}; & \node {4}; & \node {15}; & \node {11}; & \node {7}; & \node {10}; & \node {3}; & \node {5}; & \node {2}; \\
      \node{{\em (-4)}};&& \node {-1}; & \node {0}; & \node {1}; & \node {1}; & \node {4}; & \node {3}; & \node {2}; & \node {3}; & \node {1}; & \node {2}; & \node {1}; \\
      \node{{\em (-3)}};&& \node {-5}; & \node {-1}; & \node {0}; & \node {1}; & \node {5}; & \node {4}; & \node {3}; & \node {5}; & \node {2}; & \node {5}; & \node {3}; \\
      \node{{\em (-2)}};&& \node {-4}; & \node {-1}; & \node {-1}; & \node {0}; & \node {1}; & \node {1}; & \node {1}; & \node {2}; & \node {1}; & \node {3}; & \node {2}; \\
      \node{{\em (-1)}};&& \node {-15}; & \node {-4}; & \node {-5}; & \node {-1}; & \node {0}; & \node {1}; & \node {2}; & \node {5}; & \node {3}; & \node {10}; & \node {7}; \\
      \node{{\em (0)}};&\node{$\cdots$}; & \node {-11}; & \node {-3}; & \node {-4}; & \node {-1}; & \node {-1}; & \node {0}; & \node {1}; & \node {3}; & \node {2}; & \node {7}; & \node {5}; &\node{$\cdots$};\\
      \node{{\em (1)}};&& \node {-7}; & \node {-2}; & \node {-3}; & \node {-1}; & \node {-2}; & \node {-1}; & \node {0}; & \node {1}; & \node {1}; & \node {4}; & \node {3}; \\
      \node{{\em (2)}};&& \node {-10}; & \node {-3}; & \node {-5}; & \node {-2}; & \node {-5}; & \node {-3}; & \node {-1}; & \node {0}; & \node {1}; & \node {5}; & \node {4}; \\
      \node{{\em (3)}};&& \node {-3}; & \node {-1}; & \node {-2}; & \node {-1}; & \node {-3}; & \node {-2}; & \node {-1}; & \node {-1}; & \node {0}; & \node {1}; & \node {1}; \\
      \node{{\em (4)}};&& \node {-5}; & \node {-2}; & \node {-5}; & \node {-3}; & \node {-10}; & \node {-7}; & \node {-4}; & \node {-5}; & \node {-1}; & \node {0}; & \node {1}; \\
      \node{{\em (5)}};&& \node {-2}; & \node {-1}; & \node {-3}; & \node {-2}; & \node {-7}; & \node {-5}; & \node {-3}; & \node {-4}; & \node {-1}; & \node {-1}; & \node {0}; \\
      &&&&&&& \node{$\vdots$}; &&&&&& \\
    };
  \end{tikzpicture} 
\end{center}
}

We complete our paper with the following observations.

\begin{lemma}\label{lem:special}
Let $T$ be a triangulation of the infinite strip $\mathbb{V}(M_1, M_2)$, for some subset $M_2$ of $\mathbb{Z}$.  If $T$ contains a peripheral arc on its lower boundary, then there exists at least one special vertex on its lower boundary.
\end{lemma}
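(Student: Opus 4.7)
My plan is to select a peripheral arc $\alpha$ on the lower boundary whose endpoints $(i,0)$ and $(j,0)$ minimise the integer $j-i$, and then to show that necessarily $j-i=2$, which will immediately force the trapped vertex $(i+1,0)$ to be special. Such a minimum exists: by hypothesis the set of peripheral arcs of $T$ on the lower boundary is non-empty, and since each arc must be non-contractible its two endpoints cannot be adjacent, so $j-i\geq 2$ for every such arc; well-ordering on $\{2,3,4,\dots\}$ then yields the desired minimum.

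The heart of the argument is to exclude the case $j-i\geq 3$. In this situation, $\alpha$ together with the lower boundary segments from $(i,0)$ through $(j,0)$ bounds a closed disk-shaped region $R$ whose boundary carries exactly the $j-i+1\geq 4$ marked points $(i,0),(i+1,0),\dots,(j,0)$ (and no others). By the maximality of $T$, the arcs of $T$ lying in $R$ must form a triangulation of this polygon; otherwise one could adjoin some diagonal of the polygon to $T$ without violating pairwise compatibility, contradicting maximality. But any such diagonal is itself a peripheral arc from $(i',0)$ to $(j',0)$ with $i\leq i'<j'\leq j$ and $j'-i'<j-i$, contradicting the minimality of $\alpha$. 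Hence $j-i=2$.

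With $j=i+2$, I then claim $(i+1,0)$ is special. Any arc of $T$ incident to $(i+1,0)$ would leave this vertex into the interior of the strip. If its other endpoint lay outside the region enclosed by $\alpha$, the arc would have to cross $\alpha$, violating compatibility; if the endpoint were inside the region, it could only be $(i,0)$ or $(i+2,0)$, but both are adjacent to $(i+1,0)$, so the resulting curve would be contractible and hence not an arc in our sense. Consequently $(i+1,0)$ is incident to no arc of $T$, i.e., it is a special vertex on the lower boundary, which is precisely the conclusion required.

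The main obstacle is the step guaranteeing that the region $R$ inside $\alpha$ is actually triangulated by arcs of $T$; this rests on unpacking the maximality of the triangulation and on the observation that any prospective diagonal drawn inside $R$ is automatically compatible with every arc of $T$ lying outside $R$, since the arc $\alpha$ separates the two sides of the strip. Once this is properly justified, the minimality argument and the concluding step are both immediate.
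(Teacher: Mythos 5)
Your proof is correct, but it follows a genuinely different route from the paper. You apply an extremal argument: pick a peripheral arc $\alpha$ from $(i,0)$ to $(j,0)$ minimizing $j-i$, use maximality of $T$ (any untriangulated region under $\alpha$ would admit a compatible diagonal, and any diagonal under $\alpha$ would be a shorter peripheral arc) to force $j-i=2$, and conclude that the trapped vertex $(i+1,0)$ is special since any arc at it would either cross $\alpha$ or be isotopic to a boundary segment. The paper instead cuts along an \emph{arbitrary} peripheral arc from $(p,0)$ to $(q,0)$, observes that $T$ induces a triangulation of the finite polygon on $(p,0),\dots,(q,0)$, and invokes the classical fact (essentially Lemma 1 of Broline--Crowe--Isaacs, provable by induction) that a triangulated polygon with at least four vertices has two non-consecutive special vertices, so at least one special vertex lies strictly between $(p,0)$ and $(q,0)$ and remains special in $T$. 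Both arguments rest on the same standard point that maximality makes the region under a peripheral arc a triangulated polygon; yours is more self-contained in that it avoids the two-ears lemma and even shows the special vertex sits under a peripheral arc of span two, while the paper's version localizes a special vertex under any prescribed peripheral arc, which is occasionally the more useful formulation.
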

\begin{proof}
Suppose that there is a peripheral arc from $(p,0)$ to $(q,0)$, with $p<q$.  Consider the polygon $P$ formed by the vertices $(p,0), (p+1,0), \dots, (q,0)$ together with its triangulation inherited from $T$.  If $q=p+2$, then $P$ is a triangle, and the vertex $p+1$ is special in $P$, and in $T$.  Else, $P$ has at least four vertices, and it is easy to show by induction, that $P$ has at least two special non-consecutive vertices (see \cite[Lemma 1]{BCI74}).  Consequently, at least one of the vertices $(p+1,0), (p+2,0), \dots, (q-1,0)$ is special in $P$, and thus in $T$.
\end{proof}

\begin{corollary}\label{prop:1}
Let $t$ be an infinite frieze with quiddity sequence $(a_i)_{i\in\mathbb{Z}}$. If $t$ has enough ones, then $a_i=1$ for some $i\in\mathbb{Z}$.
\end{corollary}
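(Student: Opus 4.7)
The plan is to translate the hypothesis into a statement about triangulations of the infinite strip $\mathbb{V}(M_1, \varnothing)$, and then apply Lemma~\ref{lem:special}.

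First, since $t$ has enough ones, Proposition~\ref{prop:enough ones} provides an admissible triangulation $T$ of $\mathbb{V}(M_1, \varnothing)$ with no special marked points on the upper boundary such that $t = \Phi(T)$. Because $M_2 = \varnothing$, every arc of $T$ is necessarily a peripheral arc on the lower boundary. Moreover, the enough-ones hypothesis, combined with Corollary~\ref{cor:peripheral}, guarantees that $T$ contains at least one such peripheral arc (in fact, infinitely many, enclosing any prescribed pair of marked points on the lower boundary).

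Next, I would invoke Lemma~\ref{lem:special}: since $T$ contains a peripheral arc on its lower boundary, $T$ has at least one special vertex on the lower boundary, say $(i,0)$. By the definition of special, no arc of $T$ is incident to $(i,0)$, so the only edges of the triangulation meeting $(i,0)$ are the two boundary segments joining $(i,0)$ to $(i-1,0)$ and to $(i+1,0)$. Consequently, there is exactly one triangle of $T$ incident to $(i,0)$, namely the triangle with vertices $(i-1,0)$, $(i,0)$, $(i+1,0)$, whose third side must be a peripheral arc from $(i-1,0)$ to $(i+1,0)$ in $T$.

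Since $a_i$ is, by construction of $\Phi$, the number of triangles of $T$ incident to $(i,0)$, we conclude $a_i=1$, which is exactly the desired statement. I do not foresee any serious obstacle in carrying this out: the argument is essentially a direct combination of Proposition~\ref{prop:enough ones} with the structural Lemma~\ref{lem:special}, the only small verification being the clean interpretation, in the setting $M_2=\varnothing$, of a special vertex on the lower boundary as a vertex at which exactly one triangle (of the above form) occurs.
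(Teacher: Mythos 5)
Your proposal is correct and follows essentially the same route as the paper: invoke Proposition~\ref{prop:enough ones} to realize $t=\Phi(T)$ for an admissible triangulation $T$ of $\mathbb{V}(M_1,\varnothing)$, then apply Lemma~\ref{lem:special} to produce a special vertex $(i,0)$ on the lower boundary, which forces $a_i=1$. Your extra remarks (why a peripheral arc exists, and why, by maximality, the special vertex lies in exactly one triangle $(i-1,0),(i,0),(i+1,0)$) simply spell out details the paper leaves implicit.
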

\begin{proof}
Suppose that $t$ is an infinite frieze with quiddity sequence $(a_i)_{i\in\mathbb{Z}}$, with enough ones. By Proposition~\ref{prop:enough ones}, there exists a triangulation $T$ of $\mathbb{V}(M_1, \varnothing)$ such that $t=\Phi(T)$.  Since $T$ has no bridging arcs, it follows from Lemma~\ref{lem:special} that $T$ has at least one special vertex on its lower boundary, say $(k,0)$.  Consequently, $a_k=1$.
\end{proof}

Observe that the above corollary also follows directly from Algorithm~\ref{algo}.  Indeed, if $a_i\geq 2$ for all $i\in\mathbb{Z}$, then there is nothing to do in Step (A).  Consequently, all arcs appearing in the triangulation $\Psi(t)$ are added in Step (B), and therefore are bridging arcs.  Thus, $M_2\neq \varnothing$.

\section*{Acknowledgements}
This work was initiated during Summer 2015 following an invitation from T. Holm, P. J{\o}rgensen and S. Morier-Genoud to the mini-workshop entitled \lq\lq Friezes\rq\rq\, in Oberwolfach (Germany) in November 2015. Fruitful discussions with K. Baur and M. Tschabold during and after the mini-workshop allowed this paper to take its much improved current form.  I am extremely grateful to these five people. The author was supported by the Natural Sciences and Engineering Research Council of Canada (NSERC).

\end{document}